\numberwithin{equation}{section}
\def\be{\begin{equation}}
\def\ene{\end{equation}}
\def\BF{\mathbb F}
\def\BP{\mathbb P}
\def\fa{\mathfrak a}
\def\fb{\mathfrak b}\def\fgl{\mathfrak g\mathfrak l}
\def\tdim{{\rm dim}}
\def\hd{,...,}
\def\cF{{\mathcal F}}
\def\cG{{\mathcal G}}
\def\cO{{\mathcal O}}
\def\11{\mathbf 1}
\def\fh{{\mathfrak h}}
\def\fsl{{\mathfrak {sl}}}
\def\fsp{{\mathfrak {sp}}}
\def\fso{{\mathfrak {so}}}
\def\fe{{\mathfrak e}}\def\fm{{\mathfrak m}}
\def\ff{{\mathfrak f}}
\def\fg{{\mathfrak g}}
\def\fp{{\mathfrak p}}
\def\fk{{\mathfrak k}}
\def\a{\alpha}
\def\b{\beta}
\def\n{\nu}
\def\d{\delta}
\def\m{\mu}
\def\up#1{{}^{({#1})}}
\def\e{\varepsilon}
\def\ot{{\mathord{ \otimes } }}
\def\op{{\mathord{\,\oplus }\,}}
\def\fgl{\frak g\frak l}\def\fsl{\frak s\frak l}
\def\op{\oplus}
\def\BF{\Bbb F}
\def\ff#1{\Bbb F\Bbb F^{#1}}
\def\op{\oplus}
\def\ff{\mathfrak f}
\def\ul{\underline}
\def\a{\alpha}
\def\b{\beta}
\def\fso{\frak{so}}
\def\BP{\mathbb  P}
\def\fp{\mathfrak  p}
\def\fg{\mathfrak  g}
\def\hd{, \hdots ,}
\def\tdim{\operatorname{dim}}
\def\tker{\operatorname{ker}}
\def\tmod{\operatorname{mod}}
\def\tmin{\operatorname{min}}
\def\up#1{{}^{({#1})}}
\def\tspan{\operatorname{span}}
\def\be{\begin{equation}}
\def\ene{\end{equation}}
\def\tspan{{\rm span}}
\def\tspan{{\rm span}}
\def\a{\alpha}
\def\b{\beta}
\def\c{\gamma}
\def\d{\delta}
\def\e{\varepsilon}
\def\z{\zeta}
\def\m{\mu}
\def\n{\nu}
\def\w{\omega}
\def\x{\xi}
\def\bC{\mathbb C}
\def\bP{\mathbb P}
\def\bQ{\mathbb Q}
\def\bZ{\mathbb Z}
\def\cF{\mathcal F}
\def\cG{\mathcal G}
\def\cK{\mathcal K}
\def\cO{\mathcal O}
\def\fa{\mathfrak{a}}
\def\fb{\mathfrak{b}}
\def\fe{\mathfrak{e}}
\def\ff{\mathfrak{f}}
\def\fg{{\mathfrak{g}}}
\def\fgl{\mathfrak{gl}}
\def\fh{\mathfrak{h}}
\def\fk{\mathfrak{k}}
\def\fm{\mathfrak{m}}
\def\fp{\mathfrak{p}}
\def\fsl{\mathfrak{sl}}
\def\fso{\mathfrak{so}}
\def\fsp{\mathfrak{sp}}
\def\sfa{\mathsf{a}}
\def\tta{\mathtt{a}}
\def\tAd{\mathrm{Ad}}
\def\tAnn{\mathrm{Ann}}
\def\td{\mathrm{d}}
\def\tdim{\mathrm{dim}}
\def\ttE{\mathtt{E}}
\def\tEnd{\mathrm{End}}
\def\be{\mathbf{e}}
\def\sff{\mathsf{f}}
\def\sfF{\mathsf{F}}
\def\tGL{\mathrm{GL}}
\def\tti{\mathtt{i}}
\def\tId{\mathrm{Id}}
\def\ttj{\mathtt{j}}
\def\ttJ{\mathtt{J}}
\def\tker{\mathrm{ker}}
\def\sfL{\mathsf{L}}
\def\tmin{\mathrm{min}}
\def\tmod{\mathrm{mod}}
\def\sfN{\mathsf{N}}
\def\sfq{\mathsf{q}}
\def\ttq{\mathtt{q}}
\def\sfr{\mathsf{r}}
\def\tspan{\mathrm{span}}
\def\tSeg{\mathrm{Seg}}
\def\tSym{\mathrm{Sym}}
\def\sfT{\mathsf{T}}
\def\bu{\mathbf{u}}
\def\sfv{\mathsf{v}}
\def\bx{\mathbf{x}}
\def\op{\oplus}
\def\ot{\otimes}
\def\ul{\underline}
\def\wt{\widetilde}
\def\wh{\widehat}
\def\lefthook{\hbox{\small{$\lrcorner\, $}}}
\def\half{\tfrac{1}{2}}
\def\inj{\hookrightarrow}
\newcounter{numcnt}
\newcounter{cnt}
\newcounter{acnt}
\newenvironment{a_list}{ 
  \begin{list}{{\emph{(\alph{acnt})}}}
   {\usecounter{acnt} \setlength{\itemsep}{3pt}
    \setlength{\leftmargin}{25pt} \setlength{\labelwidth}{20pt} }
   }
   {\end{list}}
\newcounter{Acnt}
\newcounter{icnt}
\newcounter{Icnt}
\newcounter{exam_cnt}
\newcounter{mccnt}
\newenvironment{circlist}{ 
  \begin{list}{$\circ$}
   {\usecounter{cnt} \setlength{\itemsep}{2pt}
    \setlength{\leftmargin}{15pt} \setlength{\labelwidth}{20pt} }
   }
   {\end{list}}
\newtheorem{corollary}[equation]{Corollary}
\newtheorem*{corollary*}{Corollary}
\newtheorem{lemma}[equation]{Lemma}
\newtheorem*{lemma*}{Lemma}
\newtheorem{proposition}[equation]{Proposition}
\newtheorem*{proposition*}{Proposition}
\newtheorem{theorem}[equation]{Theorem}
\newtheorem*{theorem*}{Theorem}
\theoremstyle{remark}
\newtheorem*{assume*}{Assume}
\newtheorem*{claim*}{Claim}
\newtheorem{definition}[equation]{Definition}
\newtheorem*{definition*}{Definition}
\newtheorem*{example*}{Example}
\newtheorem*{hint*}{Hint}
\newtheorem*{notation*}{Notation}
\newtheorem*{answer*}{Answer}
\newtheorem{remark}[equation]{Remark}
\newtheorem*{remark*}{Remark}
\newtheorem*{remarks*}{Remarks}
\numberwithin{HWeq}{section}
\theoremstyle{definition}
\begin{document}
\title[Fubini-Griffiths-Harris rigidity]{Fubini-Griffiths-Harris rigidity of homogeneous varieties}
\author{J.M. Landsberg \& C. Robles}
\thanks{Both authors supported by NSF-DMS 1006353.}
\email{jml@math.tamu.edu, robles@math.tamu.edu}
\address{Mathematics Department, TAMU Mail-stop 3368, College Station, TX  77843-3368}
\subjclass[2010]{53A55, 
 53C24, 
 53C30, 
 14M17} 
\date{\today}
\begin{abstract}
Upper bounds on projective rigidity of each homogeneously embedded homogeneous variety are determined; and a new, invariant characterization of the Fubini forms is given.
\end{abstract}

\maketitle

\section{Introduction}
 
\subsection{History}

The study of the projective rigidity of homogeneous varieties dates back to Monge; he proved that conics in the plane are characterized by a fifth-order ODE. Fubini \cite{fubini} then showed that the higher dimensional smooth quadric hypersurfaces are characterized by a third-order system of PDE; we say these quadric hypersurfaces are {\it (Fubini-Griffiths-Harris) rigid} at order three (cf. Definition \ref{D:FGH}).  Later, Griffiths and Harris \cite{MR559347} conjectured that the Segre variety $\tSeg(\bP^2\times \bP^2)\subset \bP^8$ could be characterized by a {\it second-order} system of PDE; that is, the Segre variety is rigid at order two.  This was shown to be the case in \cite{MR1713310}; indeed, any irreducible, rank two, compact Hermitian symmetric space (CHSS) in its minimal homogeneous embedding (except for the quadric hypersurface) is rigid at order two \cite{MR2269783}.  Hwang and Yamaguchi \cite{MR2030098} then solved the rigidity problem for all homogeneously embedded irreducible CHSS: excluding $\bP^m$ and the quadric hypersurface, any irreducible, homogeneously embedded compact Hermitian symmetric space with osculating sequence of length $f$ is rigid at order $f$. (The length of the osculating filtration associated to a CHSS in its minimal homogeneous embedding is equal to its rank.)
 
Prior to \cite{MR2030098} the rigidity arguments all used standard exterior differential systems (EDS) machinery.  In \cite{MR2030098} Hwang and Yamaguchi observed that (partial) vanishing of a Lie algebra cohomology group implies rigidity.  Results of Kostant \cite{MR0142696} reduce the computation of the Lie algebra cohomology to Weyl group combinatorics.  Because Lie algebra cohomology appears in the analysis of the EDS  associated to the rigidity problem only in the case that $G/P$ is CHSS, it seemed that the approach of Hwang and Yamaguchi did not extend to the general case.  

\subsection{Extending Hwang and Yamaguchi}

This paper is a sequel to \cite{LRrigid}.  The two articles complete a project to extend the approach of Hwang and Yamaguchi to general $G/P \inj \bP V$.

The geometry and representation theory of $G/P\inj \bP  V$ each determine filtrations of $V$.  The filtrations coincide only when $G/P$ is CHSS.  Both filtrations give rise to families of exterior differential systems: the Fubini systems, which determine the order of rigidity, are induced by the geometric filtration; the Lie algebra graded systems, introduced in \cite{LRrigid}, are determined by the representation theoretic filtration.  The two EDS agree precisely when the filtrations agree: only when $G/P$ is CHSS. 

In \cite{LRrigid}, we took the representation-theoretic filtration as our starting point.  However, it still was not clear how to apply Kostant's machinery as the differential system associated to the filtration does not lead to Lie algebra cohomology.  This second difficulty was overcome by introducing a new type of exterior differential system, which we called a {\it filtered EDS}.  These systems can be defined on any manifold $\Sigma$ equipped with a filtration of the cotangent bundle, cf. Definition \ref{D:filtsys}.  Analysis of the filtered EDS does lead to Lie algebra cohomology, and we applied Kostant's results to study the rigidity of filtered systems.

The filtered systems are easily related to the system associated to the representation theoretic filtration, but the connection with the original Fubini systems was unclear in general.  (In any given case, one could determine the relation by brute computation, and we did so for adjoint varieties in \cite{LRrigid}.)  In this paper we introduce the \emph{graded Fubini system}, which is a classical EDS.  The grading is induced by the representation theory, and is akin the `weighting' of variables in CR--geometry; see, for example, \cite[pp. 229]{MR0425155}.  Integral manifolds of the graded Fubini system are integral manifolds of a filtered system.  The rigidity results of \cite{LRrigid} then yield our main result (Theorem \ref{T:main}).

\subsection{Rigidity of homogeneous varieties}

Our main result (Theorem \ref{T:main}) is a precise rigidity statement in terms of the graded Fubini system.  It implies the following general Fubini rigidity statement for arbitrary $G/P\hookrightarrow \bP V$. 

\begin{theorem}[Corollary to Theorem \ref{T:main}]\label{bbmainthm}  Let $G$ be a complex semi-simple Lie group with irreducible representation $V$ of highest weight $\pi$.  Let $G/P \hookrightarrow \bP V$ be a homogeneous embedding, and $\ttE$ the grading element associated to $P$.  Set $\ttq = \pi(\ttE) + \pi^*(\ttE)$, with $\pi^*$ the highest weight of $V^*$.
\begin{a_list}
\item If no factor of $G/P$ corresponds to a quadric hypersurface or $A_r/P_\ttJ$, with $1$ or $r$ in $\ttJ$, then $G/P \inj \bP V$ is Fubini-Griffiths-Harris rigid 
at   order $\ttq$.
\item If no factor of $G/P$ corresponds to $A_r/P_\ttJ$, with $1$ or $r$ in $\ttJ$, then $G/P\inj\bP V$ is Fubini-Griffiths-Harris rigid at   order $\ttq+1$.
\end{a_list}
\end{theorem}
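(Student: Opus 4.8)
The plan is to deduce this corollary from Theorem \ref{T:main} by translating the graded Fubini rigidity statement into a classical Fubini-Griffiths-Harris order, and then by verifying the cohomological hypothesis of Theorem \ref{T:main} one simple factor at a time. First I would fix the normalization of the grading element: shifting so that the base point $[v_\pi]\in\bP V$ carries $\ttE$-weight zero, the eigenvalues of $\ttE$ on $V$ run from $0$ down to $-\ttq$, where $\ttq=\pi(\ttE)+\pi^*(\ttE)$ is precisely the spread because the lowest weight of $V$ equals $-\pi^*$, so that the lowest $\ttE$-eigenvalue is $(-\pi^*)(\ttE)=-\pi^*(\ttE)$. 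Consequently the associated graded of the osculating filtration of $G/P\inj\bP V$ is the $\ttE$-eigenspace decomposition of $V$, and the top weighted degree occurring in the graded Fubini system is exactly $\ttq$.

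Next I would relate agreement of the classical Fubini forms to a given order with being an integral manifold of the graded Fubini system. By construction the graded Fubini system is the classical Fubini system with its defining forms re-graded by $\ttE$-weight; a submanifold agreeing with $G/P\inj\bP V$ to classical order $k$ matches every component of weighted degree at most $k$. Since the maximal weighted degree is $\ttq$, agreement to order $\ttq$ (resp.\ $\ttq+1$) yields an integral manifold of the graded Fubini system at the appropriate level. Theorem \ref{T:main} then applies: under its vanishing hypothesis such an integral manifold is forced to be an open subset of $G/P$, which is exactly Fubini-Griffiths-Harris rigidity in the sense of Definition \ref{D:FGH} at that order.

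It then remains to check the vanishing hypothesis of Theorem \ref{T:main}. Tracing through the filtered EDS of \cite{LRrigid}, this hypothesis is the vanishing of the positively weighted components of a Lie algebra cohomology group $H^1(\fg_-,\fg)$ (the prolongation, or Spencer, cohomology of the symbol). By Kostant \cite{MR0142696} this module is computed from the length-one and length-two elements of the Hasse diagram of minimal-length coset representatives, reducing the problem to Weyl-group combinatorics; and since the cohomology of $\fg=\bigoplus_i\fg_i$ is the direct sum of the cohomologies of the simple factors, the analysis proceeds factor by factor. I would show that the components in weighted degree exceeding $\ttq$ vanish for every factor except $A_r/P_\ttJ$ with $1$ or $r$ in $\ttJ$, giving part (b); and that the weighted degree-$\ttq$ component additionally vanishes for every remaining factor except the quadric hypersurface, giving part (a). The quadric is precisely the factor whose first obstruction lies in degree $\ttq$ rather than strictly above it, which is why it is rigid only at order $\ttq+1$, recovering Fubini's classical order-three statement ($\ttq=2$) for quadrics; the endpoint nodes $1,r$ of $A_r$ produce cohomology that does not vanish even in degree $\ttq+1$ (the extreme case $A_r/P_1=\bP^r$ being flat, hence never rigid), so those factors are excluded from both parts.

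I expect the main obstacle to be the bridge of the second paragraph. Away from the compact Hermitian symmetric case the representation-theoretic grading genuinely refines the osculating filtration, so one must verify that the classical order-$\ttq$ (resp.\ order-$\ttq+1$) jet really controls \emph{all} weighted components up to that degree, and not merely the osculating data; this is exactly the difficulty for which the graded Fubini system was introduced, and making the equivalence of integral manifolds precise is where the work lies. A secondary difficulty is the bookkeeping in the Kostant computation: pinning down the quadric as the unique factor whose obstruction first appears in degree $\ttq$, and the endpoint nodes of the $A_r$ diagram as exactly those yielding persistent obstructions, requires a careful uniform analysis of the relevant Hasse-diagram elements across all simple types.
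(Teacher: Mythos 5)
Your first two paragraphs do capture the paper's actual deduction, but you leave unproved the one step that matters (while flagging it as the main obstacle), and your third paragraph rests on a misreading of Theorem \ref{T:main}. The bridge you worry about is in fact immediate from the definitions: by Corollary \ref{C:deg} and Definition \ref{D:Fd}, the graded form $\sfF_c$ takes values in $(\sfN\ot\sfL^*)_{-c}\ot\tSym_c\,\fg_+$, and since every factor of $\fg_+$ has $\ttE$--degree at least $1$, any monomial in $\tSym_c\,\fg_+$ has polynomial degree $j\le c$; hence every coefficient of $\sfF_c$ is a coefficient of some classical Fubini form $F^j$ with $j\le c$. Therefore agreement to classical order $d$ (Definition \ref{D:FGH}) implies agreement to graded order $d$ (Definition \ref{D:agree}): graded agreement is the \emph{weaker} hypothesis, so graded rigidity is the \emph{stronger} conclusion, and Theorem \ref{bbmainthm} follows from Theorem \ref{T:main} with the same $d=\ttq$ or $\ttq+1$. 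This is precisely what the paper means when it remarks that Theorem \ref{T:main} is stronger than Theorem \ref{bbmainthm}. Your concern that the classical order-$\ttq$ jet might not control ``all weighted components up to that degree'' dissolves once you note that the inequality runs $j\le c$, not $j\ge c$; the refinement of the osculating filtration by $\ttE$ only ever lowers, never raises, the classical order at which a given graded component appears.

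Your third paragraph is not needed and is partly off target. Theorem \ref{T:main} has no cohomological vanishing hypothesis awaiting verification: its hypothesis is the same combinatorial exclusion of factors (quadric hypersurfaces, and $A_r/P_\ttJ$ with $1$ or $r$ in $\ttJ$) that appears verbatim in Theorem \ref{bbmainthm}, so there is nothing to check factor by factor. The Kostant/Weyl-group computation you propose to carry out is the content of Theorem \ref{T:filt}, which this paper imports from \cite{LRrigid} and does not reprove; re-deriving it is outside the scope of the corollary. Were you to redo it, two of your claims would need repair: the obstruction lives in $H^1(\fg_-,\fgl(V)/\fg)$ (equivalently, with coefficients in $\fg^\perp$), not in $H^1(\fg_-,\fg)$; and for semi-simple $G$ the coefficient module $\fgl(V)$ does not split as a direct sum over the simple factors, so the factor-by-factor reduction is itself a nontrivial step rather than a formality.
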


\begin{remark*}
Fubini-Griffiths-Harris rigidity is defined in Definition \ref{D:FGH}, and the grading element is defined in Section \ref{S:grel}.
\end{remark*}

\begin{example*}[Maximal parabolic]
The values of $\ttq$ in the case that $\pi = \pi_\tti$ is a fundamental weight are listed in Table \ref{t:max_par}.  (Here $\ttJ = \{\tti\}$.)
\begin{table}[h] 
\renewcommand{\arraystretch}{1.3}
\caption{Values of $\ttq$ for $G/P_\tti \subset \bP V_{\pi_\tti}$.}
\begin{tabular}{|c||c|c|c|c|c|c|c|}
\hline
 $G$ & $A_r$ & \multicolumn{2}{|c|}{$B_r$} &
 \multicolumn{2}{|c|}{$C_r$} & \multicolumn{2}{|c|}{$D_r$} \\
\hline
 $\tti$ & $1 \le \tti \le r$ & $\tti < r $ & $r$ & $\tti < r$ & $r$ 
     & $\tti<r-1$ & $r-1$, $r$ \\
\hline
 $\ttq$ & $\tmin\{\tti, r+1-\tti\}$ & $2\tti$ & $r$ & $2\tti$ & $r$ 
     & $2\tti$ & $\lfloor r/2 \rfloor$ \\
\hline
\end{tabular}
\bigskip\\
\begin{tabular}{|c||c|c|c|c|c|c|c|c|c|c|c|}
\hline
$G$ & \multicolumn{4}{|c|}{$E_6$}   
    & \multicolumn{7}{|c|}{$E_7$}   \\  \hline
$\tti$ & $1,6$ & $2$ & $3,5$ & $4$
    & $1$ & $2$ & $3$ & $4$ & $5$ & $6$ & $7$  \\ \hline
$\ttq$ & $2$   & $4$ & $6$   & $12$
    & $4$ & $7$ & $12$ & $24$ & $15$ & $8$ & $3$ \\ \hline
\end{tabular} 
\bigskip\\
\begin{tabular}{|c||c|c|c|c|c|c|c|c|c|c|c|}
\hline
$G$ & \multicolumn{8}{|c|}{$E_8$} & \multicolumn{2}{|c|}{$F_4$} 
    & $G_2$ \\ \hline
$\tti$ & $1$ & $2$ & $3$ & $4$ & $5$ & $6$ & $7$ & $8$ 
    & $1, 4$ & $2, 3$ & $1, 2$ \\ \hline
$\ttq$ & $8$ & $16$ & $28$ & $60$ & $40$ & $24$ & $12$ & $4$
    & $4$ & $12$ & $4$ \\
\hline
\end{tabular}
\label{t:max_par}
\end{table}
\end{example*}

\begin{example*}[Flag varieties]
The subgroup $P$ is Borel (and $\ttJ = \{1,\ldots,r\}$) when $\pi$ is the sum $\rho = \pi_1 + \cdots + \pi_r$ of the fundamental weights, $r = \mathrm{rank}(G)$.  The corresponding values of $\ttq = 2\rho(\ttE)$ are listed in Table \ref{t:min_par}.
\begin{table}[h] 
\renewcommand{\arraystretch}{1.3}
\caption{Values of $\ttq$ for $G/B \subset \bP V_{\rho}$.}
\begin{tabular}{|c||c|c|c|c|}
\hline
$G$ & $A_r$ & $B_r$ & $C_r$ & $D_r$ \\
\hline
$\ttq$ & $r(r+1)(r+2)/6$ & $r(r+1)(4r-1)/6$ & $r(r+1)(4r-1)/6$ 
    & $r(r-1)(2r-1)/3$  \\
\hline
\end{tabular}
\bigskip\\
\begin{tabular}{|c||c|c|c|c|c|}
\hline
$G$ & $E_6$ & $E_7$ & $E_8$ & $F_4$ & $G_2$ \\
\hline
$\ttq$ & $156$ & $399$ & $1240$ & $110$ & $16$ \\
\hline
\end{tabular}
\label{t:min_par}
\end{table}
\end{example*}

In the case that $G/P$ is not CHSS, Theorem \ref{T:main} is stronger than Theorem \ref{bbmainthm}.  The grading element allows one to refine the osculating sequence, and we require only partial (graded) agreement of the differential invariants up to order $\ttq$ (or $\ttq+1$).   

\subsection{Differential geometry of \boldmath $G/P \hookrightarrow \bP V$ \unboldmath } \label{S:I4}

Our colleagues in algebraic geometry have often asked us for a representation-theoretic description of the Fubini forms for homogeneous varieties $G/P \hookrightarrow \bP V$.  Proposition \ref{invarfubgp}, which generalizes the invariant description \cite[Proposition 2.3]{MR1966752} of the fundamental forms to Fubini forms, provides this characterization.

\subsection*{Acknowledgements}
We thank the anonymous referees for their careful readings and ameliorating feedback.

\section{Fubini forms} \label{S:FubF}

Given $0 \not= v \in V$, let $[v] \in \bP V$ denote the corresponding line.  Given any set $S \subset \bP V$, let $\widehat S$ denote the corresponding cone in $V \backslash\{0\}$.   Fix the following index ranges
$$ \begin{array}{r @{\ \le \ } c @{\ \le \ } l}
  0 & i , j & n =\tdim \,V\,,\\
  1 & \a,\b & m =\tdim \,X \,,\\
  1+m & \mu,\nu & n \,.
\end{array}$$

\subsection{Fubini forms in frames}\label{S:Fub}

We review the computation of Fubini forms via moving  frames briefly here; see \cite[\S2]{MR2739065} or \cite{MR2474431} for details.  Section \ref{S:F5} presents another interpretation of the Fubini forms.

Let $X\subset \BP V$ be the general points of a projective variety.  Given $x \in X$, let $\wh T_x X \subset V$ be the linear space tangent to $\wh X$ at $y \in \hat x$.  (The definition of $\wh T_x X$ does not depend on our choice of $y$.)  Let 
$$
  T_xX \ = \ (\wh T_x X / L_x) \ot L_x^*
  \quad \hbox{ and } \quad 
  N_xX \ = \ ( V / \wh T_xX) \ot L_x^*
$$ 
respectively denote the tangent and normal spaces at  $x$.  To the filtration
\begin{equation} \label{E:1filt}
  L_x \ := \ \hat x \ \subset \ \wh T_x X \ \subset \ V
\end{equation}
we associate the bundle $\cF_X$ of first-order adapted frames over $X$:  this is the set of bases $v = (v_0,\ldots,v_n)$ of $V$ such that $v_0 \in L_x$, with $x \in X$ a general point, and 
\begin{equation} \label{E:cFX}
  \wh T_x X \ = \ \tspan\{ v_0,\ldots , v_m \} \, .
\end{equation}

Given a frame $v \in \cF_X$ over $x \in X$, the tangent space $T_xX$ is spanned by the $\ul{v}_\a := ( v_\a \ \tmod \ L_x ) \ot v^0$; let $\{\ul{v}^\a\}_{\a=1}^m$ denote the dual basis of the cotangent space, and $\ul{v}^{\a_1\cdots\a_k} \in \tSym^kT^*_xX$ the symmetric product $\ul{v}^{\a_1}\cdots\ul{v}^{\a_k}$.  Similarly, the normal spaces $N_xX$ is spanned by $\ul{v}_\m := ( v_\a \ \tmod \ \wh T_xX) \ot v^0$.  So the $k$--th Fubini form $F^k_v \in (N_xX) \ot (\tSym^k\,T^*_xX)$ at $v$ may be expressed as 
$$
  F^k_v \ = \ r^\m_{\a_1\cdots\a_k} \, \ul{v}_\m \ot \ul{v}^{\a_1\cdots\a_k} \, .
$$
The coefficients $r^\m_{\a_1\cdots\a_k} : \cF_X \to \bC$ are obtained as follows.

\begin{remark} \label{R:v}
Fixing a frame $\sfv \in \cF_X$ we may identify the Lie group $\tGL(V)$ with the set of all ordered-bases (frames) $v = (v_0,\ldots,v_n)$ of $V$.  The identification maps
$$
  g \in \tGL(V) \quad \mapsto \quad v = g \cdot \sfv \, .
$$ 
\end{remark}

Given $v = (v_0,\ldots,v_n) \in \tGL(V)$, let $v^* = (v^0,\ldots,v^n)$ denote the dual basis of $V^*$.  Let $\w\in\Omega^1(\tGL(V),\fgl(V))$ denote the $\fgl(V)$--valued Maurer-Cartan form on $\tGL(V)$.  Define 1--forms $\w^i_j \in \Omega^1(\tGL(V))$ at $v$ by 
$$
  \w_v \ =: \ \w^i_j\,v_i\ot v^j \, .
$$
The Maurer-Cartan form satisfies the Maurer-Cartan equation
\begin{equation} \label{E:mc}
  \td \w \ = \ -\half [ \w , \w ] \quad \hbox{ or } \quad
  \td \w^i_j \ = \ - \w^i_\ell \wedge \w^\ell_j \, .
\end{equation}

Equation \eqref{E:cFX} implies 
\begin{equation} \label{E:mc1}
  \w^\m_0 \ = \ 0 \quad\hbox{on}\quad \cF_X \, .
\end{equation}
An application of \eqref{E:mc} to \eqref{E:mc1} yields $0 = \w^\m_\a\wedge\w^\a_0$.  By  Cartan's Lemma \cite[Lemma A.1.9]{MR2003610} there exist functions $r^\m_{\a\b} = r^\m_{\b\a}$ on $\cF_X$ such that
\begin{equation} \label{E:F2} 
  \w^\m_\a \ = \ r^\m_{\a\b} \, \w^\b_0 \, .
\end{equation}
The functions $r^\m_{\a\b}$ are the coefficients of the second Fubini form.

Given two tensors $T_{\b_1 \ldots \b_k}$ and $U_{\b_{k+1} \ldots \b_{k+\ell}}$, let $S_{k+\ell}$ denote the symmetric group on $k+\ell$ letters.  Let
$$
  T_{(\b_1 \ldots \b_k} U_{\b_{k+1} \ldots \b_{k+\ell})} \ = \ 
  \frac{1}{(k+\ell)!} \sum_{\sigma\in S_{k+\ell}} \, 
  T_{\sigma(\b_1) \ldots \sigma(\b_k)} U_{\sigma(\b_{k+1}) \ldots \sigma(\b_{k+\ell})}
$$ 
denote the   symmetrization  of their product.  For example, $T_{(\b_1} U_{\b_2)} = \half( T_{\b_1} U_{\b_2} + T_{\b_2} U_{\b_1})$.  We exclude from the symmetrization operation any index that is outside the parentheses.  For example, in $r^\m_{\a(\b_1 \ldots \b_{p-1}} \, \w^\a_{\b_p)}$ we symmetrize over only the $\b_i$, excluding the $\a$ index.   

\begin{proposition}[\cite{MR2474431}]
\label{prop:F} Let $X\subset \bP  V$ be a complex submanifold and let $x\in X$.
Set $r^\m_\a = 0$.  Assume $k>1$.  The coefficients $r^\m_{\a_1\cdots\a_k\b}$ of $F^{k+1}$,   fully symmetric in their lower indices, are defined by 
\begin{eqnarray}
  \nonumber
  r^\m_{\a_1 \ldots \a_k \b} \, \w^\b_0 & = &
  - \, \td r^\m_{\a_1 \ldots \a_k} \, - \, (k-1) \, r^\m_{\a_1 \ldots \a_k} \, \w^0_0
  \, - \, r^\n_{\a_1 \ldots \a_k} \, \w^\m_\n \hfill \\
  \label{E:F}
  & &
  + \ k \,  
  \left\{ (k-2) \, r^\m_{(\a_1 \ldots \a_{k-1}} \w^0_{\a_k)} \,  
       + \, r^\m_{\b(\a_1 \ldots \a_{k-1}} \, \w^\b_{\a_k)}  \right\} 
  \\ \nonumber
  & & 
  - \ \sum_{j=1}^{k-2} \, {\textstyle{\binom{k}{j}}} \, 
      \Big\{ 
      \ r^\m_{\b(\a_1 \ldots \a_j} \, r^\n_{\a_{j+1} \ldots \a_k)} \, \w^\b_\n \ 
      + \ (j-1)\, r^\m_{(\a_1 \ldots \a_j} \, r^\n_{\a_{j+1} \ldots \a_k)} \, 
                 \w^0_\n \, \Big\} \, . 
\end{eqnarray}
when $k>1$.
\end{proposition}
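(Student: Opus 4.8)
The plan is to prove the formula by induction on $k$, the engine being exterior differentiation of the Fubini relation one order down, followed by an application of Cartan's Lemma. The base case is the second fundamental form: relation \eqref{E:F2}, $\w^\mu_\a = r^\mu_{\a\b}\w^\b_0$ with $r^\mu_{\a\b} = r^\mu_{\b\a}$, was already produced by applying Cartan's Lemma to the consequence $0 = \w^\mu_\a\wedge\w^\a_0$ of \eqref{E:mc1} and \eqref{E:mc}. The inductive hypothesis is that the fully symmetric functions $r^\mu_{\a_1\cdots\a_k}$ have been defined and satisfy the instance of \eqref{E:F} one level down; that is, that $r^\mu_{\a_1\cdots\a_{k-1}\a_k}\,\w^{\a_k}_0$ equals the right-hand side of \eqref{E:F} with $k$ replaced by $k-1$ (with \eqref{E:F2} serving in place of the non-existent $k=1$ instance). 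Throughout we use $\w^\mu_0 = 0$ from \eqref{E:mc1} and the convention $r^\mu_\a = 0$.

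To carry out the inductive step I would exterior-differentiate this level-$(k-1)$ relation. On the left the product rule and \eqref{E:mc} give $\td\w^{\a_k}_0 = -\w^{\a_k}_0\wedge\w^0_0 - \w^{\a_k}_\b\wedge\w^\b_0$, the normal contribution dropping out because $\w^\nu_0 = 0$. On the right one applies \eqref{E:mc} to each connection form $\w^0_0$, $\w^\mu_\nu$, $\w^0_{\a_k}$, $\w^\b_{\a_k}$ occurring in \eqref{E:F}, and then uses \eqref{E:mc1} together with the second-fundamental-form substitution $\w^\mu_\a = r^\mu_{\a\b}\w^\b_0$ to rewrite every normal-over-tangent factor semibasically. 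Let $\eta^\mu_{\a_1\cdots\a_k}$ denote the right-hand side of \eqref{E:F}, viewed as a one-form in the index $\b$. After collecting, the differentiated identity reduces to $\eta^\mu_{\a_1\cdots\a_{k-1}\a_k}\wedge\w^{\a_k}_0 = 0$ (summed on $\a_k$). Since $\eta^\mu_{\a_1\cdots\a_k}$ is symmetric in $\a_1,\ldots,\a_k$ by construction, Cartan's Lemma applied for each fixed $(\a_1,\ldots,\a_{k-1})$ forces $\eta^\mu_{\a_1\cdots\a_k}$ to be semibasic, $\eta^\mu_{\a_1\cdots\a_k} = r^\mu_{\a_1\cdots\a_k\b}\,\w^\b_0$, with $r^\mu_{\a_1\cdots\a_k\b}$ symmetric in the final pair of indices; combined with the symmetry in $\a_1,\ldots,\a_k$ this yields full symmetry in all $k+1$ lower indices. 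This simultaneously shows that the right-hand side of \eqref{E:F} is semibasic — so that the coefficients are well defined — and that they are fully symmetric, completing the induction.

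The delicate point, and the step I expect to be the main obstacle, is the verification that the two-form identity collapses exactly to $\eta^\mu_{\a_1\cdots\a_k}\wedge\w^{\a_k}_0 = 0$ with the stated coefficients. A priori $\eta^\mu_{\a_1\cdots\a_k}$ contains the non-semibasic forms $\w^0_0$, $\w^\mu_\nu$, $\w^0_{\a}$, $\w^\b_{\a}$, and (through the quadratic terms) $\w^0_\nu$ and $\w^\b_\nu$; Cartan's Lemma certifies that all of these exactly cancel the fibre-direction part of $\td r^\mu_{\a_1\cdots\a_k}$, but pinning down the combinatorial coefficients $(k-1)$, $k(k-2)$, $\binom{k}{j}$ and $(j-1)$ requires care. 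The weight terms $-(k-1)\,r^\mu_{\a_1\cdots\a_k}\w^0_0$ and $-r^\nu_{\a_1\cdots\a_k}\w^\mu_\nu$ record the scaling and normal-frame variation of $r^\mu_{\a_1\cdots\a_k}$; the linear terms in braces arise from $\td\w^{\a_k}_0$ and from applying \eqref{E:mc} to the weight and linear terms present at the previous level; and the quadratic $r\cdot r$ terms appear when the Maurer-Cartan expansions of $\w^\mu_\nu$ and $\w^0_\nu$ meet the substitution $\w^\mu_\a = r^\mu_{\a\b}\w^\b_0$, the binomial weights $\binom{k}{j}$ emerging from re-symmetrizing the resulting products over $\a_1,\ldots,\a_k$. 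The convention $r^\mu_\a = 0$ annihilates the $j=0$ and $j=k-1$ boundary terms, each of which carries a singly-indexed $r$, so that the quadratic sum runs only over $1 \le j \le k-2$; the explicit factor $(j-1)$ removes, in addition, the second quadratic term at $j=1$.
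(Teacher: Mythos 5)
Your strategy --- induct on $k$, exterior-differentiate the level-$(k-1)$ Fubini relation, collapse the result to $\varphi^\m_{\a_1\cdots\a_k\b}\wedge\w^\b_0=0$ where $\varphi$ denotes the right-hand side of \eqref{E:F}, and invoke Cartan's Lemma to get semibasicity and full symmetry of the new coefficients --- is precisely the iterated construction the paper sketches in the proof of Lemma \ref{L:key} (via $\psi^\m_{\a_1\cdots\a_k}=\varphi^\m_{\a_1\cdots\a_k}-r^\m_{\a_1\cdots\a_k\b}\w^\b_0$ and $0=\td\psi^\m_{\a_1\cdots\a_k}=\varphi^\m_{\a_1\cdots\a_k\b}\wedge\w^\b_0$). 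The combinatorial bookkeeping you flag as the delicate point is exactly what the paper itself defers to \cite{MR2474431}, so your proposal matches the paper's own treatment in both approach and level of detail.
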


\begin{example*}
The coefficients of the third Fubini form $F_3$ are given by
\begin{equation}
\label{E:F3}
  r^\m_{\a\b\c} \w^\c_0 \ = \ - \td r^\m_{\a\b} - r^\m_{\a\b} \, \w^0_0 
    - r^\n_{\a\b} \, \w^\m_\n + r^\m_{\a\e} \, \w^\e_\b + r^\m_{\b\e} \, \w^\e_\a \, . 
\end{equation}
\end{example*}

\subsection{The osculating filtration and the bundle \boldmath $\cF^\ell_X$ \unboldmath} \label{S:Fell}

The filtration \eqref{E:1filt} may be refined to the \emph{osculating filtration}
\begin{equation}\label{oscseq}
  L_x \ := \ \hat x \ \subset \ \wh T_xX \ \subset \ \wh T_x\up 2 X
  \ \subset \,\cdots\, \subset \ \wh T_x \up \ell X \ = \ V \, .
\end{equation}
By definition, the linear space $\wh T^{(k)}_xX$ is spanned by the derivatives $y^{(j)}(0)$, of order $j \le k$, of smooth curves $y(t)$ in $\wh X$ with $y(0) \in \hat x$.  Let $\cF^\ell_X \subset \cF_X$ be the sub-bundle of frames adapted to the filtration \eqref{oscseq}.  We assume, without loss of generality that the fixed frame $\sfv$ of Remark \ref{R:v} lies in $\cF^\ell_X$.

\subsection{On \boldmath $G/P \inj \bP V$ \unboldmath} \label{S:Fub_homog}

Let $G$ be a complex semi-simple Lie group with Lie algebra $\fg$.  
\begin{center}
\emph{Fix a choice of Borel subalgebra $\fb \subset \fg$ with Cartan subalgebra $\fh \subset \fb$.}
\end{center}
\noindent Let $\{ \pi_1,\ldots,\pi_r\} \subset \fh^*$ be the fundamental weights of $\fg$.  The parabolic subalgebras (containing $\fb$) are in bijection with subsets $\ttJ \subset \{ 1 , \ldots, r \}$.  The maximal parabolics are given by $|\ttJ| = 1$, the choice of a single fundamental weight; the minimal parabolic $\fb$ is given by $\ttJ = \{1,\ldots, r\}$.

Let $V$ be an irreducible $G$-module of highest weight $\pi \in \fh^*$, and let $0\not=\sfv_0\in V$ be a highest weight vector.  The rational homogeneous variety $G\cdot[\sfv_0] = Z \subset \bP V$ is the unique closed $G$--orbit in $\bP V$.  The stabilizer of the highest weight line
$$
  o \ := \ [\sfv_0] \ \in \ Z
$$
is a parabolic subgroup $P \subset G$, and $Z \simeq G/P$.  The subset $\ttJ$ associated to $P$ is defined by $\pi = \sum_{\ttj\in\ttJ} p^\ttj\,\pi_\ttj$, with $p^\ttj > 0$.  The variety $Z$ is the \emph{homogeneous embedding of $G/P$ into $\bP V$}.  

Given a fixed frame $\sfv\in\cF_Z$, there is a natural embedding of $G$ into $\cF_Z$ as
$$
  \cG \ := \ G \cdot \sfv \, \subset \cF_Z \,.
$$
Note that $T_oZ \simeq \fg/\fp$ as a $\fp$--module.  More generally, if 
$$
  z \ = \ g \cdot o \,,
$$
then $T_zZ \simeq \fg/\tAd_g(\fp)$ as an $\tAd_g(\fp)$--module.

The parabolic algebra $\fp$, and our choice of Cartan and Borel subalgebras $\fh \subset \fb \subset \fp$,  determines a decomposition 
\begin{equation}  \label{E:coarse}
  \fg \ = \ \fg_+ \,\op\, \fg_0 \,\op\, \fg_- \quad \hbox{ with } \quad
  \fp \ = \ \fg_{\ge0} \,;
\end{equation}
where $\fg_0$ is a reductive subalgebra, and $\fg_\pm$ are nilpotent subalgebras.  (See also \S\ref{S:grel}.)  Moreover, $\fg_\pm$ are $\fg_0$--modules, and $\fg_-{}^* \simeq \fg_+$ (under the Killing form).  As a $\fg_0$--module, $T_oZ \simeq \fg_-$.  More generally,
\begin{equation}\label{E:TgZ}
  T_z Z \ \simeq \ \tAd_g(\fg_-) \quad\hbox{ and }\quad
  T^*_z Z \ \simeq \ \tAd_g(\fg_+) 
\end{equation}
as $\tAd_g(\fg_0)$--modules.
In particular, 
\begin{equation} \label{E:SymTgZ}
  \tSym^k T_z Z \ \simeq \ \tSym^k \tAd_g(\fg_-) 
  \ \stackrel{(\ast)}{\simeq} \ U^k(\tAd_g(\fg_-)) \ \simeq \ 
  \tAd_g \,U^k(\fg_-) \ \subset \ U^k(\fg) 
\end{equation}
as $\tAd_g(\fg_0)$--modules; the identification $(\ast)$ is by the Poincar\'e-Birkoff-Witt Theorem.  We denote the Lie algebra action on a $G$-module by a lower dot: given $\z\in \fg$ and $w\in V$, we write $\z.w$.  

The action of $\fg$ on $V$ induces actions of the universal enveloping algebra $U(\fg)$ on $V$, its dual $V^*$ and $\fgl(V) = \tEnd(V) = V \ot V^*$.  The action on $\fgl(V)$ restricts to the adjoint action on $\fg \subset \fgl(V)$.  Given $\bu \in U(\fg)$, define a 1-form $\bu.\w \in \Omega^1(\cG,\fg)$ by 
$$
  (\bu.\w)(\xi) \ := \ \bu.(\w(\xi)) \quad \hbox{for all } \xi \in T\cG \, .
$$ 
Alternatively,
$$
  (\bu.\w)_v \ := \ \w^i_j \, \bu.(v_i\ot v^j) \ = \ 
  \w^i_j\, \left( (\bu.v_i) \ot v^j \,+\, v_i \ot(\bu.v^j) \right) \, .
$$
Define 1--forms $(\bu.\w)^i_j \in \Omega^1(\cG)$ at $v\in\cG$ by 
$$
  (\bu.\w)_v \ =: \ (\bu.\w)^i_j \,v_i \ot v^j \, .
$$

Every $A^i_j \,v_i\ot v^j \in \fg \subset \fgl(V)$ satisfies $A^\m_0 = 0$.  In particular, 
\begin{equation} \label{E:u.w}
  (\bu.\w)^\m_0 \ = \ 0 \, .
\end{equation}
The fact that $\tAd_g(\fp)$ preserves the line through $v_0 = g\cdot \sfv_0$ implies that \eqref{E:u.w} is trivial when $\bu \in U(\tAd_g(\fp))$.  When $\bu \in U(\tAd_g(\fg_-))$, Lemma \ref{L:key} asserts that \eqref{E:u.w} is the equations \eqref{E:F2} and \eqref{E:F} defining the coefficients of the Fubini form at $v = g\cdot \sfv \in \cG$.

\begin{lemma}[Key observation] \label{L:key}
Let $V$ be an irreducible module for the reductive group $G$.  Given $g \in G$, let $z = g \cdot o \in Z \simeq G/P$ and $v = g\cdot\sfv \in \cG$.  The equations \eqref{E:F2} and \eqref{E:F} defining the coefficients of the $(k+1)$-st order Fubini form of $Z \simeq G/P$ at $v$ are $0 = (\bu.\w)^\mu_0$ with $\bu \in U^k(\tAd_g(\fg_-)) \simeq \tSym^kT_zZ$.
\end{lemma}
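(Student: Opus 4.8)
The plan is to recognize $\bu.\w$ as the iterated adjoint action of $\bu$ on the $\fg$-valued Maurer--Cartan form, and then to show that unwinding the resulting (automatically vanishing) expression in the adapted frame reproduces \eqref{E:F} term by term. Since $\fgl(V)\simeq V\ot V^*$ carries the adjoint $U(\fg)$-action, for each $v\in\cG$ one has $(\bu.\w)_v = \tad_\bu(\w_v)$, where $\tad_{X\bu'}=\tad_X\circ\tad_{\bu'}$ and $\tad_X(T)=X\circ T-T\circ X$. On $\cG$ the value $\w_v$ lies in $\fg$, and $\tad_\bu$ preserves $\fg$, so $\tad_\bu(\w_v)\in\fg$; consequently its $(\m,0)$-entry vanishes, i.e. $(\bu.\w)^\m_0\equiv 0$. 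The content of the lemma is therefore not that this form vanishes, but that the explicit expansion of the left-hand side --- read off in the frame $v$ --- is precisely the right-hand side of \eqref{E:F} (resp. \eqref{E:F2}). I would establish this as an identity of $1$-forms on $\cG$, taking for $\bu$ a Poincar\'e--Birkhoff--Witt monomial $\bu=X_{\a_1}\cdots X_{\a_k}$ in a basis $\{X_\a\}$ of $\tAd_g(\fg_-)$ normalized so that $X_\a.v_0=v_\a$.

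First I would treat $k=1$ to fix the dictionary. Writing $(X_\a)^i_j=\langle v^i,X_\a.v_j\rangle$ for the frame entries, a direct computation of $\tad_{X_\a}(\w_v)(v_0)$ gives $(X_\a.\w)^\m_0=(X_\a)^\m_i\,\w^i_0-\w^\m_\a$. Imposing $\w^\n_0=0$ and $(X_\a)^\m_0=0$ (every element of $\fg$ kills the $(\m,0)$-slot) collapses this to $\w^\m_\a=(X_\a)^\m_\b\,\w^\b_0$, which is exactly \eqref{E:F2} together with the identification $r^\m_{\a\b}=(X_\a)^\m_\b=\langle v^\m,X_\a X_\b.v_0\rangle$. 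More generally I expect $r^\m_{\a_1\cdots\a_k}=\langle v^\m,X_{(\a_1}\cdots X_{\a_k)}.v_0\rangle$, the full symmetry of the $r$'s being matched to ordered monomials by the identification $(\ast)$ of \eqref{E:SymTgZ}. Because $Z$ is homogeneous, these coefficients are $G$-invariant, hence constant along $\cG$, so $\td r^\m_{\a_1\cdots\a_k}$ drops upon restriction to $\cG$; this is the mechanism by which the $\td r$ term of \eqref{E:F} is accounted for.

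For the inductive step I would pass from $F^k$ to $F^{k+1}$ by appending a factor, mirroring the derivation of \eqref{E:F}, which proceeds by applying $\td$ to the order-$k$ relation and invoking the Maurer--Cartan equation \eqref{E:mc}. Expanding $\tad_{X_{\a_1}}\cdots\tad_{X_{\a_k}}(\w_v)$ by distributing the $k$ factors to the left and right of $\w_v$, applying the result to $v_0$, and extracting the $v^\m$-component, one obtains a signed sum of terms $\langle v^\m,(\text{left factors})\,\w_v\,(\text{right factors})\,v_0\rangle$. In each term $\w_v$ contributes some $\w^i_l$: the tangent case $i=\b$ of the all-factors-on-the-left term produces the left-hand side $r^\m_{\a_1\cdots\a_k\b}\,\w^\b_0$, the $i=0$ cases contribute the $\w^0_0$ term, and the terms carrying factors to the right of $\w_v$ contribute the $\w^\m_\n$, $\w^0_\n$, $\w^\b_\a$ and $\w^0_\a$ pieces; the binomial weights $\binom{k}{j}$ arise from counting how many of the $k$ factors sit on each side. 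Throughout one uses $\w^\n_0=0$ and $(\,\cdot\,)^\m_0=0$ to discard the slots that do not descend to $Z$.

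The main obstacle is the combinatorial reconciliation. The operators $\tad_{X_{\a_i}}$ are applied in a fixed order and do not commute, whereas the Fubini coefficients are fully symmetric; so I must show that every reordering commutator $[X_{\a_i},X_{\a_j}]\in\fg_-$ generated in the expansion feeds only into the genuinely lower-order terms $r\cdot r$ and $r\cdot\w$ of \eqref{E:F}, with exactly the weights $k$, $(k-2)$, $(j-1)$ and $\binom{k}{j}$ recorded there, and never a spurious top-order contribution. Getting these symmetrization factors and signs to match --- equivalently, verifying that the symmetrization $(\ast)$ transports the noncommutative adjoint recursion to the symmetric recursion \eqref{E:F} --- is the crux; once it is in place, the $\w^0_0$, $\w^\m_\n$, $\w^0_\n$ cases and the leading $\w^\b_0$ term fall out of the index bookkeeping above.
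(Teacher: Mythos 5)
Your setup is sound: you correctly observe that $(\bu.\w)^\m_0$ vanishes identically on $\cG$ because $\w$ is $\fg$--valued there and the adjoint action of $U(\fg)$ preserves $\fg$, and your $k=1$ computation $(X_\a.\w)^\m_0=(X_\a)^\m_\b\,\w^\b_0-\w^\m_\a$ reproduces \eqref{E:F2} with $r^\m_{\a\b}=(X_\a)^\m_\b$ exactly as in the paper's worked example. But the proposal has a genuine gap: the entire inductive step is deferred. You yourself flag ``the combinatorial reconciliation'' --- matching the left/right distribution of the noncommuting factors $\tad_{X_{\a_i}}$ against the specific weights $k$, $(k-2)$, $(j-1)$, $\binom{k}{j}$ of \eqref{E:F}, and showing that reordering commutators in $\fg_-$ never produce spurious top-order terms --- as ``the crux,'' and then do not carry it out. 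Since the lemma \emph{is} precisely the assertion that this expansion equals \eqref{E:F} term by term, what remains unproven is the whole content of the statement. A secondary issue: you invoke constancy of the $r^\m_{\a_1\cdots\a_k}$ on $\cG$ to discard the $\td r$ term of \eqref{E:F}, but in the paper that constancy is Corollary \ref{C:cnst}, \emph{deduced from} this lemma; if you use it you must justify it independently (it can be, from transitivity of the left $G$--action on $\cG$, but this needs to be said).

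The paper's proof shows the combinatorics can be avoided entirely. Writing $\psi^\m_{\a_1\cdots\a_k}$ for the difference of the two sides of \eqref{E:F}, the recursion defining the Fubini coefficients gives $\td\psi^\m_{\a_1\cdots\a_k}=\varphi^\m_{\a_1\cdots\a_k\b}\wedge\w^\b_0$, hence $\psi^\m_{\a_1\cdots\a_k\b}=-\tilde u_\b\lefthook\td\psi^\m_{\a_1\cdots\a_k}$ for the vector fields $\tilde u_\b$ dual to the Maurer--Cartan coframing. Assuming inductively that $\psi^\m_{\a_1\cdots\a_k}=-(\bu.\w)^\m_0$, the Maurer--Cartan equation converts $\td(\bu.\w)^\m_0$ into $-\half(\bu.[\w,\w])^\m_0$, and contracting with $\tilde u_\b$ appends the factor $u_\b$ on the right of $\bu$. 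In other words, the operation that defines $F^{k+1}$ from $F^k$ (differentiate, then contract with $\tilde u_\b$) is literally right-multiplication by $u_\b$ in $U(\fg)$; the $\td r$ terms and all the binomial coefficients of \eqref{E:F} are absorbed into $\psi$ and never need to be matched individually. If you want to salvage your direct approach, you would in effect be re-deriving \eqref{E:F} from scratch; the cleaner route is to organize the induction around the defining recursion for \eqref{E:F} as the paper does.
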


\begin{remark*}
The expression \eqref{E:F} defining the coefficients of the Fubini forms simplifies substantially on $\cG$; see \eqref{E:F_homog}.
\end{remark*}

\noindent Note that the identification $U^k(\tAd_g(\fg_-)) \simeq \tSym^kT_zZ$ is \eqref{E:SymTgZ}.  The lemma is proven below.  First we illustrate the computation at $\sfv \in \cG$.  Refine the indices $\{\mu\} = \{\mu_2,\ldots,\mu_\ell\}$
to respect the filtration \eqref{oscseq}.  On $\cF^\ell_Z$
$$
  \w^{\mu_k}_\a \ = \ 0 \quad \forall \ k > 2 \,, \quad \hbox{ and } \quad
  \w^{\mu_k}_{\nu_j} \ = \ 0 \quad \forall \ k-j > 1 \, . 
$$

Fix a basis $u_1,\ldots,u_m$ of $\fg_-\subset\fgl(V)$.  Without loss of generality, we may suppose that $\sfv$ was selected so that $\sfv_\a = u_\a . \sfv_0$, and $\sfv \in \cF_Z^\ell$.  The latter implies 
$$
  \cG \ \subset \ \cF_Z^\ell \, .
$$

\begin{example*}
Fix $A = u_\a \,, \ B = u_\b \,,\ C = u_\c \in \fg_-$.  
Using the inclusion $\fg_-\subset \fgl(V)$,
write  $A^i_j$ by $A = A^i_j \, \sfv_i \ot \sfv^j \in \fgl(V)$.  Note that
$$
  A^j_0 = \d^j_\a \quad \hbox{ and } \quad 
  (AB)^{\mu_2}_0 \,=\,A^{\mu_2}_\b \ = \ B^{\mu_2}_\a \,=\, (BA)^{\mu_2}_0 \, .
$$
  Write $\w = \w^i_j\,\sfv_i \ot \sfv^j$.  Then
$$
  (A.\w)^{\mu_2}_0 \ = \ \w^\e_0 \, A^{\mu_2}_\e \ - \ \w^{\mu_2}_\a \,. 
$$
Thus $(A.\w)^{\mu_2}_0 = 0$ is \eqref{E:F2} and 
\begin{equation} \label{E:r2}
  r^{\mu_2}_{\a\b} \ = \ A^{\mu_2}_\b \ = \ B^{\mu_2}_\a
  \ = \ (AB)^{\mu_2}_0 \ = \ (BA)^{\mu_2}_0 \, .
\end{equation}
Next,
\begin{eqnarray*}
  (AB.\w)^{\mu_3}_0 & = & r^{\nu_2}_{\a\b}\,\w^{\mu_3}_{\nu_2} \ - \ 
  A^{\mu_3}_{\nu_2}\,r^{\nu_2}_{\b\e}\,\w^\e_0 
\\ 
 (AB.\w)^{\mu_2}_0 & = &   
  r^{\mu_2}_{\a\b}\,\w^0_0 \ + \ r^{\nu_2}_{\a\b}\,\w^{\mu_2}_{\nu_2} \ - \ 
  r^{\mu_2}_{\a\e}\,\w^\e_\b \ - \ r^{\mu_2}_{\e\b}\,\w^\e_\a \\
  & & + \left(
    B^{\mu_2}_\e\,A^\e_\c \,+\, A^\e_\b\,r^{\mu_2}_{\c\e} \,-\, 
    A^{\mu_2}_{\nu_2}\,r^{\nu_2}_{\b\c} \right. ) \w^\c_0 
\end{eqnarray*}
are the equations of \eqref{E:F3}, and 
\begin{eqnarray*}
  r^{\mu_3}_{\a\b\c} & = & 
  A^{\mu_3}_{\nu_2}\,r^{\nu_2}_{\b\c} \ = \ 
  A^{\mu_3}_j \,B^j_k \,C^k_0 \ = \ (ABC)^{\mu_3}_0 \, . \\
  r^{\mu_2}_{\a\b\c} & = & 
    B^{\mu_2}_\e\,A^\e_\c \,+\, A^\e_\b\,r^{\mu_2}_{\c\e} \,-\, 
    A^{\mu_2}_{\nu_2}\,r^{\nu_2}_{\b\c} \, .  
\end{eqnarray*}
\end{example*}

\begin{proof}[Proof of Lemma \ref{L:key}]
The equations \eqref{E:F} are obtained inductively.  The proof of the lemma is based on this iterated construction, so we briefly review the procedure; details may be found in \cite{MR2474431}.  Let $\varphi^\m_{\a_1\cdots\a_k}$ denote the right-hand side of \eqref{E:F}.  Set 
\begin{subequations}\label{SE:phipsi}
\begin{equation}
  \psi^\m_{\a_1\cdots\a_k} \ := \ 
  \varphi^\m_{\a_1\cdots\a_k} \, -\, r^\m_{\a_1\cdots\a_k\b}\,\w^\b_0 \, .
\end{equation}
Then \eqref{E:F} is equivalent to 
\begin{equation}
  0 \ = \ \psi^\m_{\a_1\cdots\a_k} \, .  
\end{equation}
\end{subequations}
Differentiating this expression one calculates 
$$
  0 \ = \ \td \psi^\m_{\a_1\cdots\a_k} \ = \ 
  \varphi^\m_{\a_1\cdots\a_k\b}\wedge\w^\b_0 \, .
$$

We now prove the lemma.  Recall that $\{u_\a\}_{\a=1}^m$ form a basis of $\fg_- \subset \fgl(V)$ satisfying $u_\a . \sfv_0 = \sfv_\a$.  Therefore, $\{\tAd_g(u_\a)\}_{\a=1}^m$ form a basis of $\tAd_g(\fg_-)$ satisfying $\tAd_g(u_\a) . v_0 = v_\a$.  When restricted to $\cG$ the Maurer-Cartan form takes values in $\fg$ and forms a coframing.  So we may define vector fields $\tilde u_\a$ on $\cG$ by $\w_{g\cdot\sfv}(\tilde u_\a) = \tAd_g(u_\a)$.  Given $\bu = \bu_g = \tAd_g(u_{\a_1}\cdots u_{\a_k})\in\tAd_gU^k(\fg_-)$, it suffices to show 
\begin{equation} \label{E:suf}
  \psi^\m_{\a_1\cdots\a_k} \ = \ -(\bu.\w)^\m_0 \, .
\end{equation}  

We first show that \eqref{E:suf} holds for $k=1$.  For this case we take $\bu = \tAd_g(u_\a)$.  We have $\psi^\m_\a = \w^\m_\a - r^\m_{\a\b} \w^\b_0$ and $\varphi^\m_\a = \w^\m_\a$.  Thus $\varphi^\m_\a\wedge\w^\a_0 = -\td \w^\m_0 = \half [\w,\w]^\m_0$.  Applying $\tilde u_\a \lefthook$ yields $-\psi^\m_\a = [\tAd_g(u_\a),\w]^\m_0 = (\tAd_g(u_\a).\w)^\m_0$.  This establishes \eqref{E:suf} for $k=1$.

For the general case note that the definition of $\varphi^\m_{\a_1\cdots\a_k}$ implies $\tilde u_\b \lefthook \varphi^\m_{\a_1\cdots\a_k} = r^\m_{\a_1\cdots\a_k\b}$.  Therefore, $\psi^\m_{\a_1\cdots\a_k\b} = -\tilde u_\b \lefthook \td \psi^\m_{\a_1\cdots\a_k}$.  If \eqref{E:suf} holds, then \eqref{E:mc} yields
\begin{eqnarray*}
   \psi^\m_{\a_1\cdots\a_k\b} & = & 
   \tilde u_\b \lefthook \td (\bu.\w)^\m_0 \ = \ 
   -\half \tilde u_\b \lefthook ( \bu.[\w,\w])^\m_0 \ = \ 
   - \left( \bu.[\tilde u_\b \lefthook\w,\w]\right)^\m_0 \\
   & = & - (\bu.(\tAd_g(u_\b).\w))^\m_0  \ = \
   -(\tAd_g(u_{\a_1}\cdots u_{\a_k} u_\b).\w)^\m_0 \, ,
\end{eqnarray*}
establishing \eqref{E:suf} for $k+1$.
\end{proof}

\begin{corollary} \label{C:cnst}
The coefficients of the Fubini forms are constant on $\cG \subset \cF_Z$.
\end{corollary}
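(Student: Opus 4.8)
The plan is to observe that, on $\cG$, each Fubini coefficient is a fixed polynomial in matrix entries of the \emph{constant} operators $u_{\a_1}\cdots u_{\a_k}\in U(\fg_-)\subset\fgl(V)$, transported by $\tAd_g$ and read off in the moving frame $v = g\cdot\sfv$, and then to show that such transported-and-read entries do not depend on $g$. First I would record the elementary frame-independence computation: for any fixed $W\in\fgl(V)$ and indices $i,j$, the entry of $\tAd_g(W)$ relative to the frame $v = g\cdot\sfv$ is independent of $g$. Writing $\langle\,\cdot\,,\,\cdot\,\rangle$ for the pairing of $V^*$ with $V$ and using $v_j = g.\sfv_j$, $v^j = g.\sfv^j$ together with $\tAd_g(W) = g\circ W\circ g^{-1}$ on $V$, one finds
$$
  (\tAd_g(W))^i_j \ = \ \langle v^i,\ \tAd_g(W).v_j\rangle
  \ = \ \langle g.\sfv^i,\ g.(W.\sfv_j)\rangle
  \ = \ \langle \sfv^i,\ W.\sfv_j\rangle \ = \ W^i_j ,
$$
a constant. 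Invariantly, this is because $G$ acts on $\cG$ by left translation under the identification of Remark \ref{R:v}, and the scalar components $\w^i_j$ of the Maurer-Cartan form relative to the moving frame are left-invariant $1$-forms.

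Next I would feed this into Lemma \ref{L:key}. By that lemma and the iterated construction in its proof, at $v = g\cdot\sfv$ the top-weight coefficient is the single entry $r^{\mu_k}_{\a_1\cdots\a_k}(v) = (\tAd_g(u_{\a_1}\cdots u_{\a_k}))^{\mu_k}_0$, read in the frame $v$ (this is \eqref{E:r2} for $k=2$, and $r^{\mu_3}_{\a\b\c}=(ABC)^{\mu_3}_0$ for $k=3$), while the lower-weight coefficients, such as $r^{\mu_2}_{\a\b\c}$, are, through \eqref{E:F}, fixed polynomial expressions in the entries of the operators $\tAd_g(u_\c)$ and in lower-order coefficients. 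Since the frame-independence computation shows that every entry occurring is constant on $\cG$, an induction on the order $k$, with base case \eqref{E:r2}, shows that each $r^\mu_{\a_1\cdots\a_k}$ is constant on $\cG$, which is the assertion.

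The hard part will be the bookkeeping for the lower-weight coefficients: unlike the top piece, these are not single entries of $\tAd_g(\bu)$ but honest polynomial combinations produced by \eqref{E:F}, so one must verify that \eqref{E:F} introduces nothing beyond products of the constant entries $(\tAd_g(u_\c))^i_j$ and the previously-treated coefficients $r$. The simplified form of \eqref{E:F} on $\cG$ promised in the remark following Lemma \ref{L:key} (equation \eqref{E:F_homog}) makes this transparent, since every term appearing there is manifestly of that shape; granting it, the induction closes at once.
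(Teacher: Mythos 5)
Your argument is correct and is essentially the paper's proof in concrete coordinates: your frame-independence computation $(\tAd_g(W))^i_j = W^i_j$ is exactly the left-invariance of $\w$ (and of $v\mapsto \bu_v$) that the paper invokes, and both proofs then close by the same induction on $k$ with base case \eqref{E:F2} and inductive step \eqref{E:F}, where the only non-algebraic term $\td r^\m_{\a_1\cdots\a_k}$ vanishes by the inductive hypothesis. One caution: do not lean on \eqref{E:F_homog} for the bookkeeping, since the paper derives that simplification \emph{from} Corollary \ref{C:cnst}; your direct verification with \eqref{E:F} already suffices, so the shortcut is unnecessary as well as circular.
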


\begin{proof}
Given $g \in G$, let $L_g : \cG \to \cG$ denote the map $v \mapsto g\cdot v$.  The left-invariance of the Maurer-Cartan form is the statement $(L_g)^*\w = \w$.  Similarly, the map $\bu : \cG \to U(\fg)$ given by $g\cdot \sfv \mapsto \bu_g = \tAd_g(u_{\a_1}\cdots u_{\a_k})$ transforms as $(L_g)^*\bu = \bu$.  It follows from \eqref{E:suf} that $(L_g)^*\psi^\m_{\a_1\cdots\a_k} = \psi^\m_{\a_1\cdots\a_k}$.  

To see that the coefficients of $F^2$ are constant, set $k=1$.  Equation \eqref{E:F2} and the left-invariance of $\w$ imply that the $r^\m_{\a\b}$ are constant.  The corollary now follows from \eqref{E:F} by induction on $k$.
\end{proof}

\begin{definition} \label{D:FGH}
Let $X \subset \bP V$ be a variety.  We say \emph{$X$ agrees with $G/P \inj \bP V$ to order $k$} if there exists a sub-bundle $\wt\cF_{X} \subset \cF_X$, defined over the general points of $X$, on which the coefficients $\tilde r$ of the Fubini forms $F^j$  satisfy $\tilde r = r_{|\cG}$ for all $j \le k$.

We say $G/P \inj \bP V$ is \emph{Fubini-Griffiths-Harris (FGH) rigid at order $k$} if agreement to order $k$ implies that $X$ is projectively equivalent to $G/P$.
\end{definition}

\section{The graded Fubini system of $G/P \inj \bP V$}

\subsection{The grading element} \label{S:grel}
With our choices of Borel and Cartan subalgebras $\fb \supset\fh$, a parabolic subalgebra $\fp = \fp(\ttJ)$ determines a grading element $\ttE = \ttE(\fp) \in \fh$ as follows.  Let $\{ E_1, \ldots , E_r\}$ be the basis of $\fh$ that is dual to the simple roots.  Define 
$$
  \ttE \ = \ \ttE(\fp) \ := \ \sum_{\ttj \in \ttJ} E_\ttj \, .
$$
Every $\fg$--module $M$ decomposes into a direct sum $M = \op\,M_d$ of $\ttE$--eigenspaces of eigenvalue $d \in \bQ$.  Call this direct sum the {\it $\ttE$--graded decomposition of $M$}, and $M_d$  \emph{the component of $M$ of $\ttE$--graded degree $d$}.  In the case that $V$ is an irreducible $\fg$--module of highest weight $\pi\in\fh^*$,  the decomposition is $V = V_{\pi(\ttE)} \op V_{\pi(\ttE)-1} \op V_{\pi(\ttE)-2} \op \cdots \op V_{-\pi^*(\ttE)}$, with $\pi^*$ the highest weight of the dual module $V^*$.  In general, $0 < \pi(\ttE) \in \bQ$.

The $\ttE$--graded decomposition of $\fg$
\begin{equation} \label{E:Zg}
  \fg \ = \ \fg_{\sfa} \,\op\,\cdots\,\op\,\fg_1 \, \op \, \fg_0 \,\op\, \fg_{-1} \,\op\,\cdots\,\op\,\fg_{-\sfa} \,,
\end{equation} 
satisfies $0 < \sfa \in \bZ$, and is compatible with \eqref{E:coarse}.  That is, the component of $\ttE$--graded degree 0 is the reductive Lie subalgebra $\fg_0$ of \eqref{E:coarse}, $\fg_+ = \op_{d>0} \fg_d$ and $\fg_- = \op_{d>0} \fg_{-d}$.

\subsection{Graded Fubini forms} \label{S:Zgr}

In this section we consider the Fubini forms of $G/P \inj \bP V$ at $\sfv\in\cG$.  The choice of Cartan and Borel subalgebras $\fh \subset \fb \subset \fg$ determines a splitting 
\begin{equation} \label{E:split0}
  V \ = \ \sfL  \,\op\, \sfT  \, \op\,  \sfN 
\end{equation}
of the filtration $\sfL :=  L_o  \,\subset\, \wh T_{[\sfv_0]}Z \,\subset\, V$.   There exist natural identifications
\begin{equation} \label{E:sfId}
  T_oZ \ = \ \sfT \ot \sfL^* \ = \ \fg_- \,,\quad
  N_oZ \ = \ \sfN \ot \sfL^* \,.
\end{equation}
 As a $\fg_0$--module,  $\sfN$ admits a decomposition into $\ttE$--eigenspaces $\sfN = \sfN_{\pi(\ttE)-2} \op \cdots \op \sfN_{-\pi^*(\ttE)}$,  where $\pi$ is the highest weight of $V$ and $-\pi^*$ is the lowest.  Note that $\sfL$ is an eigenline with eigenvalue $\pi(\ttE)$.  So 
$$
  \sfN \ot \sfL^* \ = \ (\sfN\ot \sfL^*)_{-2} \ \op \ \cdots \ \op \ 
  (\sfN\ot \sfL^*)_{-\ttq}
$$
where $\ttq = (\pi+\pi^*)(\ttE)$.  Similarly, the symmetric algebra admits a $\ttE$--graded decomposition
$$
   \tSym^\bullet \fg_+ \ = \ \bigoplus_{d\ge0} \tSym_d \, \fg_+ .
$$
  For example, $\tSym_0 \fg_+ = \bC$, $\tSym_1 \fg_+ = \fg_1$, 
$$
  \tSym_2 \fg_+ \,=\, \fg_2 \,\op\,\tSym^2\fg_1 \,,\quad
  \tSym_3 \fg_+ \,=\, \fg_3 \,\op\, (\fg_1 \op \fg_2) \,\op\, \tSym^3\fg_1 \, ,
$$
cf. \eqref{E:Zg}. Note that we index the $\ttE$--degree by a subscript and the 
polynomial degree by a superscript. 

\subsection{The graded Fubini system}

Corollary to Lemma \ref{L:key} we have

\begin{corollary} \label{C:deg}
The component of $F_\sfv$ in $(\sfN\ot\sfL^*)_{-s} \ot \tSym_d \, \fg_+$ vanishes if $s\not=d$.  Equivalently, $F_\sfv(\tSym_d\fg_+) \subset (\sfN\ot \sfL^*)_{-d}$.
\end{corollary}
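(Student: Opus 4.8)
The cleanest route is to show that each Fubini coefficient at $\sfv$ carries a single $\ttE$--weight and that this weight is forced to vanish; the vanishing is exactly the claimed degree matching. I would run this through the constancy along $\cG$ of Corollary~\ref{C:cnst} (itself a consequence of Lemma~\ref{L:key}). First I diagonalize $\ttE$ on the frame: take the basis $u_\a$ of $\fg_-$ (with $u_\a.\sfv_0 = \sfv_\a$) to be $\ttE$--homogeneous, $u_\a\in\fg_{-e_\a}$ with $e_\a>0$, and refine the normal indices so that every frame vector is an $\ttE$--eigenvector, $\ttE.\sfv_i = w_i\,\sfv_i$, with $w_0 = \pi(\ttE)$ and $w_\a = \pi(\ttE)-e_\a$. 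Then $\ul{\sfv}_\mu$ lies in $(\sfN\ot\sfL^*)_{w_\mu-w_0}$ and, under $T^*_oZ\simeq\fg_+$, the monomial $\ul{\sfv}^{\a_1\cdots\a_k}$ lies in $\tSym_{e_{\a_1}+\cdots+e_{\a_k}}\fg_+$, so the basis tensor $\ul{\sfv}_\mu\ot\ul{\sfv}^{\a_1\cdots\a_k}$ has $\ttE$--degree $(w_\mu-w_0)+\sum_l(w_0-w_{\a_l})$.

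Next I flow the grading element through the frame bundle. Since $\ttE$ fixes the highest weight line, $a_t := \exp(t\ttE)$ has $a_t\cdot o = o$ and maps $\cG$ into itself, so $v := a_t\cdot\sfv$ is again an adapted frame over $o$. Because $\sfv$ is an $\ttE$--eigenbasis, $v_i = e^{tw_i}\sfv_i$, and a short computation of the induced tangent, normal and dual frames gives
\begin{equation*}
  \ul{v}_\mu\ot\ul{v}^{\a_1\cdots\a_k}
  \ = \ e^{t\,[(w_\mu-w_0)+\sum_l(w_0-w_{\a_l})]}\;
  \ul{\sfv}_\mu\ot\ul{\sfv}^{\a_1\cdots\a_k}\,.
\end{equation*}
Granting that $F^k_o\in N_oZ\ot\tSym^kT^*_oZ$ is a tensor independent of the adapted frame over $o$ (the delicate point, addressed below), I expand it in the frames $\sfv$ and $v$ and compare coefficients to get $r^\mu_{\a_1\cdots\a_k}(\sfv) = e^{t[(w_\mu-w_0)+\sum_l(w_0-w_{\a_l})]}\,r^\mu_{\a_1\cdots\a_k}(v)$. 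Corollary~\ref{C:cnst} gives $r^\mu_{\a_1\cdots\a_k}(v) = r^\mu_{\a_1\cdots\a_k}(\sfv)$, so $r^\mu_{\a_1\cdots\a_k}(\sfv)\,\bigl(1 - e^{t[(w_\mu-w_0)+\sum_l(w_0-w_{\a_l})]}\bigr) = 0$ for all $t$. Hence $r^\mu_{\a_1\cdots\a_k} = 0$ unless $w_\mu - w_0 = -(e_{\a_1}+\cdots+e_{\a_k})$. Reading both sides as $\ttE$--degrees, a nonzero component of $F_\sfv$ couples $(\sfN\ot\sfL^*)_{-d}$ to $\tSym_d\fg_+$ with $d = e_{\a_1}+\cdots+e_{\a_k}$; equivalently $F_\sfv(\tSym_d\fg_+)\subset(\sfN\ot\sfL^*)_{-d}$, which is the assertion.

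The step I expect to cost the most care is the tensoriality invoked in the previous paragraph for the higher forms. For $k\ge 3$ the coefficients obey the recursion \eqref{E:F}, whose quadratic terms are not weight--homogeneous for an arbitrary submanifold, and the higher Fubini forms are canonical only modulo the lower ones; so a general frame change need not act on $r^\mu_{\a_1\cdots\a_k}$ by the clean rescaling displayed above. What saves the argument is that $a_t$ is diagonal in the $\ttE$--eigenbasis and therefore preserves each osculating level $\sfv_\mu$ with no mixing, so no ``modulo lower order'' ambiguity enters and the transformation is indeed the pure rescaling. Equivalently, one must check that restricted to $\cG$ the recursion \eqref{E:F} is $\ttE$--homogeneous — a homogeneity that fails off the homogeneous model but holds here by Lemma~\ref{L:key}, where each coefficient is extracted from the operator $\bu.\w$ with $\bu\in U^k(\fg_-)$ of pure $\ttE$--degree $-(e_{\a_1}+\cdots+e_{\a_k})$. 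Verifying this homogeneity term by term (equivalently, that $a_t$ acts on $F^k_o$ without level mixing) is the single routine computation I would still need to complete the proof.
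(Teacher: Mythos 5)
Your argument is correct, and the step you flag as unfinished does close, but you take a genuinely different route from the paper. The paper reads the corollary directly off Lemma \ref{L:key}: at $\sfv\in\cG$ the coefficients of $F^{k+1}$ are cut out by the equations $0=(\bu.\w)^\m_0$ with $\bu=u_{\a_1}\cdots u_{\a_k}$ of pure $\ttE$--degree $-(e_{\a_1}+\cdots+e_{\a_k})$, and on $\cG$ the Maurer--Cartan form is $\fg$--valued, hence $\ttE$--graded; since the $(\m,0)$--entry is precisely the projection to $\sfN\ot\sfL^*$, the coefficient $r^\m_{\a_1\cdots\a_k\b}$ of $\w^\b_0$ can be nonzero only when $w_\m-w_0=-(e_\b+\textstyle\sum_l e_{\a_l})$, so the degree match is immediate. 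You instead run a scaling argument: flow the frame by $a_t=\exp(t\ttE)$, show that the coefficients rescale by their weight under this diagonal frame change, and use constancy on $\cG$ (Corollary \ref{C:cnst}) to force the weight to vanish. This is a clean ``torus symmetry kills the off-weight components'' argument, and you correctly isolate the one delicate point --- the higher Fubini forms are not tensors on $X$, so one must verify that the diagonal change acts by pure rescaling with no level mixing. In fact that verification is easier than you suggest and needs no input from Lemma \ref{L:key}: every term of the recursion \eqref{E:F} carries the same total weight in its free indices $\m,\a_1,\ldots,\a_k$ (the contracted indices cancel in pairs), so \eqref{E:F} is equivariant under \emph{any} constant diagonal change of adapted frame, on or off the homogeneous model. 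What homogeneity of $Z$ actually supplies is the constancy along the $a_t$--orbit, i.e.\ Corollary \ref{C:cnst}, which the paper itself derives from Lemma \ref{L:key}; so your proof is a longer path through the same lemma. What it buys is that one never has to open up the expression $(\bu.\w)^\m_0$, or the formula \eqref{E:F}, beyond checking weight homogeneity.
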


\begin{definition}\label{D:Fd}
Let $\sfF_d$ denote the component of the Fubini form $F_\sfv$ taking value in $(\sfN \ot \sfL^*)_{-d} \ot \tSym_d \, \fg_+$.    We call $\sfF_d$ the \emph{graded Fubini form of $G/P\inj\bP V$ of graded-degree $d$}. 
\end{definition}

\begin{definition} \label{D:agree}
Let $X \subset \bP V$ be a variety.  We say \emph{$X$ agrees with $G/P \inj \bP V$ to graded-order $d$} if there exists a sub-bundle $\wt\cF_{X} \subset \cF_X$, defined over the general points of $X$, on which the coefficients $\tilde r$ of the Fubini forms satisfy $\tilde r^\m_{\a_1\cdots\a_k} = \sfr^\m_{\a_1\cdots\a_k}$ for all coefficients $\sfr^\m_{\a_1\cdots\a_k}$ of the graded Fubini forms $\sfF_c$ of $G/P \inj \bP V$ with $c \le d$ (cf. Definition \ref{D:Fd}).

We say $G/P \inj \bP V$ is \emph{graded-Fubini-Griffiths-Harris (graded-FGH) rigid at graded-order $d$} if agreement to graded-order $d$ implies that $X$ is projectively equivalent to $G/P$.
\end{definition}

\begin{theorem} \label{T:main}
Let $G$ be a complex semi-simple Lie group with irreducible representation $V$ of highest weight $\pi$.  Let $G/P \hookrightarrow \bP V$ be a homogeneous embedding, and $\ttE$ the grading element associated to $P$.  Set $\ttq = \pi(\ttE) + \pi^*(\ttE)$, with $\pi^*$ the highest weight of $V^*$.
\begin{a_list}
\item If no factor of $G/P$ corresponds to a quadric hypersurface or $A_r/P_\ttJ$, with $1$ or $r$ in $\ttJ$, then $G/P \inj \bP V$ is graded-Fubini-Griffiths-Harris rigid at graded-order $\ttq$.
\item If no factor of $G/P$ corresponds to $A_r/P_\ttJ$, with $1$ or $r$ in $\ttJ$, then $G/P\inj\bP V$ is graded-Fubini-Griffiths-Harris rigid at graded-order $\ttq+1$.
\end{a_list}
\end{theorem}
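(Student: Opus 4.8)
The plan is to realize graded agreement with $G/P$ as an integral-manifold condition for a classical exterior differential system, the \emph{graded Fubini system}, then to show that its integral manifolds are integral manifolds of a filtered system of the kind analyzed in \cite{LRrigid}, and finally to read off rigidity from the Lie algebra cohomology computations there. First I would set up the reduction. Assume $X \subset \bP V$ agrees with $G/P$ to graded-order $\ttq$ (resp. $\ttq+1$). By Definition \ref{D:agree} there is a subbundle $\wt\cF_X \subset \cF_X$ over the general points of $X$ on which $\tilde r^\mu_{\a_1\cdots\a_k} = \sfr^\mu_{\a_1\cdots\a_k}$ for every coefficient of every graded Fubini form $\sfF_c$ with $c \le \ttq$ (resp. $c \le \ttq+1$); by Corollary \ref{C:cnst} the model coefficients $\sfr = r_{|\cG}$ are constant, so this is agreement with a fixed flat model along a chosen frame adaptation. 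The goal is to prove that $\wt\cF_X$ prolongs to a subbundle carrying the full structure of $\cG \subset \cF_Z$; since $\cG = G \cdot \sfv$ and the Maurer--Cartan form recovers the $G$-action, this identifies an open subset of $X$ with $G/P$ and yields the asserted projective equivalence.

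Next I would build the bridge to a filtered system. By Lemma \ref{L:key} the equations \eqref{E:F2} and \eqref{E:F} defining the Fubini coefficients at $v \in \cG$ are exactly $(\bu.\w)^\mu_0 = 0$ for $\bu \in U^k(\tAd_g\fg_-) \simeq \tSym^k T_z Z$, and on $\cG$ they take the simplified graded form \eqref{E:F_homog}; Corollary \ref{C:deg} then organizes them by $\ttE$-graded degree, the degree-$d$ block being precisely the equation cutting out $\sfF_d \in (\sfN\ot\sfL^*)_{-d} \ot \tSym_d\fg_+$. I would define the graded Fubini system as the classical EDS on the adapted frame bundle generated by the differences $\tilde r^\mu_{\a_1\cdots\a_k} - \sfr^\mu_{\a_1\cdots\a_k}$ in graded degrees $\le \ttq$ (resp. $\le \ttq+1$) together with the structure equations \eqref{E:mc}, with independence condition $\w^1_0 \wedge \cdots \wedge \w^m_0 \ne 0$. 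The central step is to verify that the $\ttE$-grading induces on this frame bundle the cotangent filtration required by Definition \ref{D:filtsys} and that every integral manifold of the graded Fubini system is an integral manifold of the associated filtered system; granting this, $\cG$ is the flat integral manifold and graded agreement to order $\ttq$ (resp. $\ttq+1$) is exactly the hypothesis under which the rigidity theorem of \cite{LRrigid} applies.

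It then remains to translate the cohomological conclusion of \cite{LRrigid} into the stated combinatorial conditions. Rigidity of the filtered system is controlled by the positive-$\ttE$-degree part of the Lie algebra cohomology $H^1(\fg_-,\fg)$; by Kostant \cite{MR0142696} this is an explicit $\fg_0$-module, computable from the marked Dynkin diagram of $(\fg,\fp)$ by Weyl-group combinatorics. The quantity $\ttq = (\pi + \pi^*)(\ttE)$ is the top $\ttE$-degree appearing in $\sfN\ot\sfL^*$ and so bounds the degrees in which obstruction classes can sit. I would check via Kostant that, when no factor of $G/P$ is a quadric hypersurface or $A_r/P_\ttJ$ with $1$ or $r$ in $\ttJ$, the obstruction cohomology vanishes in all graded degrees exceeding $\ttq$, giving part (a); and that permitting a quadric factor reintroduces a single obstruction class in degree $\ttq+1$ — the classical third-order Fubini obstruction, since $\ttq = 2$ for a quadric — after which vanishing resumes, giving part (b). The excluded $A_r/P_\ttJ$ families with an extremal node (among them $\bP^m$) are exactly those whose obstruction cohomology fails to vanish in the range controlled by $\ttq+1$, so they are genuinely non-rigid and are omitted from both parts.

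I expect the main obstacle to be the bridge of the second paragraph: proving that a classical integral manifold of the graded Fubini system is genuinely an integral manifold of a filtered EDS to which \cite{LRrigid} applies. This means matching the $\ttE$-graded decomposition of the frame-bundle coframing against the cotangent filtration of Definition \ref{D:filtsys} and showing that classical graded agreement of the $\tilde r$'s reproduces the filtered agreement hypothesis degree by degree. The subtlety is that for a general $X$ the higher Fubini forms need not satisfy the graded vanishing of Corollary \ref{C:deg}, so requiring $\sfF_{\ttq+1} = 0$ is a genuine extra condition — precisely what separates part (a) from part (b). A secondary, bookkeeping difficulty is the case-by-case Kostant computation confirming that the exceptional families are exactly the quadrics and the extremal-node $A_r$ cases and that the degree cutoff is exactly $\ttq$ (resp. $\ttq+1$).
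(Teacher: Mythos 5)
Your plan coincides with the paper's proof in both structure and substance: the bridge you single out as the crux is precisely Proposition \ref{P:key}, which shows (via the kernel $\fk$ of Definition \ref{D:k}, Lemma \ref{L:key}, and the simplification \eqref{E:F_homog} of \eqref{E:F}) that agreement to graded-order $\ttq+p+1$ makes $\wt\cF_X$ an integral manifold of $\mathrm{Filt}_{p+2}(\fg_-,\fg^\perp_{\le p})$, and the theorem then follows by taking $p=-1$ for part (a) and $p=0$ for part (b) and quoting the rigidity of $\mathrm{Filt}_1(\fg_-,\fg^\perp_{\le -1})$ and $\mathrm{Filt}_2(\fg_-,\fg^\perp_{\le 0})$ from \cite{LRrigid} (Theorem \ref{T:filt}). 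One correction to your third paragraph: the obstruction to rigidity of these filtered systems lives in the Lie algebra cohomology of the quotient module $\fg^\perp\simeq\fgl(V)/\fg$ (in the $\ttE$-degrees not already annihilated by the system), not in $H^1(\fg_-,\fg)$; the latter would, for instance, miss the degree-zero obstruction that forces the quadric into part (b). Since the paper imports Theorem \ref{T:filt} as a black box, this slip does not affect the logic of the proof of Theorem \ref{T:main} itself, but it would derail the Kostant verification you propose to carry out.
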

 
\noindent Theorem \ref{T:main} is proved in Section \ref{S:prf}.

\begin{example*}[Adjoint varieties]
In the case that $G$ is simple and $G/P \subset \bP\fg=\bP V_\pi$ is an adjoint variety the highest weight $\pi$ is the highest root.  Let $\{\pi_1 , \ldots , \pi_r\}$ denote the fundamental weights of $\fg$.  Then
\begin{center} \begin{tabular}{c|cccccccc}
  $\fg$ & $\fsl_{r+1}$ & $\fso$ & $\fsp$ & $\fe_6$ & $\fe_7$ & $\fe_8$
      & $\ff_4$ & $\fg_2$ \\ \hline 
  $\pi$ & $\pi_1+\pi_r$ & $\pi_2$ & $2\pi_1$ & $\pi_2$ & $\pi_1$ & $\pi_8$ 
      & $\pi_1$ & $\pi_2$
\end{tabular}\end{center}
In each case $\ttq = 4$.  If $\fg \not= \fsl$, then Theorem \ref{T:main}(a) applies: these  adjoint varieties are graded-FGH rigid at graded-order $4$.  Compare this with \cite[Prop. 5.2]{LRrigid}.  If $\fg=\fsl_{r+1}$, then Proposition \ref{T:filt} and \cite[Lemma 7.3]{LRrigid} imply the adjoint variety is graded-FGH rigid at graded-order $4$, if $r>2$; and at graded-order $5$, if $r=2$.
\end{example*}

\section{Proof of Theorem \ref{T:main}} \label{S:prf}

\subsection{The kernel $\fk$}

Given $\z =\z^j_k \sfv_j\ot\sfv^k\in \fgl(V)$, set 
$$
  \z_{\sfN\ot \sfL^*} \ := \ \z^\m_0 \, \sfv_\m \ot \sfv^0 \,.
$$

\begin{definition} \label{D:k}
Define 
$
  \fk  :=  \{ \z \in \fgl(V) \ | \ (\bu.\z)_{\sfN \ot \sfL^*} = 0 \ 
  \forall \ \bu \in \tSym^\bullet \fg_- \} \, .
$
\end{definition}

\noindent Since $\w$ is $\fg$--valued on $\cG$, it is an immediate consequence of Lemma \ref{L:key} that 
\begin{equation} \label{E:gink}
  \fg \ \subset \ \fk \, .
\end{equation}

\begin{lemma} \label{L:k=gmod}
The vector space $\fk$ is a $\fg$--module.
\end{lemma}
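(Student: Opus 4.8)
The plan is to show that $\fk$ is stable under the adjoint action of $\fg$ on $\fgl(V)$, i.e.\ that $\z \in \fk$ and $\x \in \fg$ imply $[\x,\z] \in \fk$ (equivalently $\x.\z \in \fk$, using the lower-dot notation of the excerpt for the $\fg$-action on $\fgl(V) = V \ot V^*$). Since $\fk$ is defined as the common kernel of the family of linear conditions $\z \mapsto (\bu.\z)_{\sfN \ot \sfL^*}$ indexed by $\bu \in \tSym^\bullet \fg_-$, it is automatically a linear subspace; the only thing to verify is this closure under $\ad$. So first I would fix $\z \in \fk$ and $\x \in \fg$, and the task reduces to checking that $(\bu.(\x.\z))_{\sfN\ot\sfL^*} = 0$ for every $\bu \in \tSym^\bullet \fg_-$.

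The key algebraic manipulation is to move $\x$ across $\bu$ using the fact that the $U(\fg)$-action on $\fgl(V)$ is an algebra action, so $\bu.(\x.\z) = (\bu \x).\z$ as operators, where $\bu\x$ is the product in $U(\fg)$. The next step is to rewrite $\bu \x$ in terms of the decomposition $\fg = \fg_- \op \fg_0 \op \fg_+$ and the PBW basis. Writing $\x = \x_- + \x_0 + \x_+$ accordingly, I would commute $\x$ to the left through $\bu \in \tSym^\bullet\fg_-$: each commutator $[\x,\fg_-]$ lands in $\fg$ again, and repeated use of $\fp = \fg_{\ge 0}$ being a subalgebra shows that $\bu\x$ is a sum of terms of the form $\bu'\cdot \y$ with $\bu' \in \tSym^\bullet\fg_-$ and $\y \in \{1\} \cup \fg_{\ge 0}$, modulo terms already of the form $\bu''.\z$ with $\bu'' \in \tSym^\bullet\fg_-$ (which annihilate the $\sfN\ot\sfL^*$-component because $\z \in \fk$). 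The decisive point is then that for $\y \in \fp = \fg_{\ge 0}$, one has $(\bu'.(\y.\z))_{\sfN\ot\sfL^*} = 0$ as well: the adjoint action $\tAd(\fp)$ preserves the flag $\sfL \subset \wh T \subset V$ (this is exactly the property, invoked for $U(\tAd_g(\fp))$ just before Lemma \ref{L:key}, that $\fp$ stabilizes the highest weight line and the tangent space), so $\y.\z$ for $\y \in \fg_{\ge 0}$ differs from $\z$ only by terms that already lie in $\fk$ or that have vanishing $\sfN\ot\sfL^*$-projection after applying any $\bu' \in \tSym^\bullet\fg_-$.

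I expect the main obstacle to be the bookkeeping in the commutation step: carefully tracking, via PBW, that after pushing $\x$ to the left through $\bu$ every resulting summand either is of the form $\bu''.\z$ with $\bu'' \in \tSym^\bullet \fg_-$ (annihilated in $\sfN\ot\sfL^*$ because $\z\in\fk$) or involves a factor from $\fg_{\ge 0}$ acting first, whose contribution to the $\sfN \ot \sfL^*$-component vanishes because $\fp$ preserves the filtration \eqref{E:1filt}. Concretely, the cleanest way to organize this is to prove the grading-degree statement: the condition $(\bu.\z)_{\sfN\ot\sfL^*}=0$ only sees the part of $\bu$ in the relevant graded pieces, and $\ad_\x$ shifts $\ttE$-degree by $\ttE$-degree of $\x$, which is compatible with the graded structure of $\sfN \ot \sfL^*$ described in Section \ref{S:Zgr}. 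Once the commutation identity is established, the conclusion $(\bu.(\x.\z))_{\sfN\ot\sfL^*}=0$ is immediate, giving $\x.\z \in \fk$ and hence that $\fk$ is a $\fg$-module.
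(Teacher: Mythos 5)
Your overall strategy---reduce to $\ad$-stability, use $\bu.(\x.\z)=(\bu\x).\z$, put $\bu\x$ into a PBW normal form relative to $\fg=\fg_-\op\fg_{\ge0}$, and invoke the fact that $\fp=\fg_{\ge0}$ preserves the flag $\sfL\subset\sfL\op\sfT\subset V$---is the same as the paper's. The $\fg_-$ case is handled correctly and identically: for $\x\in\fg_-$ one has $\bu\x\in U(\fg_-)$, so the required vanishing is immediate from the definition of $\fk$.

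The gap is in the $\fg_{\ge0}$ (hence also $\fg_+$) case, and it comes from the direction of your normal form. You reduce to terms $\bu'\cdot E$ with $\bu'\in\tSym^\bullet\fg_-$ and $E\in\fg_{\ge0}$ acting \emph{first} on $\z$, and then assert $(\bu'.(E.\z))_{\sfN\ot\sfL^*}=0$. But that assertion is exactly the statement $E.\z\in\fk$ that you are trying to prove; the justification you give (that $\tAd(\fp)$ preserves the flag) only yields the case $\bu'=1$, namely $(E.\z)_{\sfN\ot\sfL^*}=0$, and says nothing about $(\bu'.(E.\z))_{\sfN\ot\sfL^*}$ for nontrivial $\bu'$. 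So the decisive step, as written, is circular. The fix is to commute in the opposite direction, which is what the paper does: write $\bu E=\mathbf{e}^s\bu_s$ with $\mathbf{e}^s\in U(\fg_{\ge0})$ on the \emph{left} (acting last) and $\bu_s\in U(\fg_-)$ on the right. Then $(\bu_s.\z)_{\sfN\ot\sfL^*}=0$ because $\z\in\fk$, and one concludes because the $U(\fg_{\ge0})$-action on $\fgl(V)$ preserves the subspace $\{\eta \ : \ \eta(\sfL)\subset\sfL\op\sfT\}=\{\eta \ : \ \eta_{\sfN\ot\sfL^*}=0\}$---this is where flag-preservation by $\fp$ actually enters, applied to $\bu_s.\z$ rather than to $\z$. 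Your closing suggestion to argue by $\ttE$-degree does not rescue the step either: $\ad_\x$ for $\x\in\fg_0$ shifts degree by zero, and the condition defining $\fk$ is not a purely graded one, so degree bookkeeping alone cannot produce the vanishing.
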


Given an algebra $\fa$, with universal enveloping algebra $U(\fa)$, we will make use of the Poincar\'e-Birkoff-Witt identification $U(\fa) = \tSym^\bullet\fa$.  

\begin{proof}
To see that $\fk$ is an $\fg_-$--module, let $A \in \fg_-$ and $\z\in\fk$.  Given $\bu \in U(\fg_-)$, write $\bu.(A.\z) = \bu'.\z$ with $\bu' = \bu A \in U(\fg_-)$.  Then $A.\z\in\fk$ follows from the definition of $\fk$.

To see that $\fk$ is an $\fg_{\ge0}$--module, let $E \in \fg_{\ge0}$.  As above, we have $\bu.(E.\z) = \be^s\bu_s.\z$ where $\bu_s \in U(\fg_-)$ and $\bx^s \in U(\fg_{\ge0})$.  The preimage of $\sfN\ot\sfL^*$ under $\be^s \in U(\fg_{\ge0})$ is contained in $\sfN\ot\sfL^*$.  By definition of $\fk$, we have $(\bu_s.\z)_{\sfN\ot\sfL^*} = 0$ for all $\mu$.  This yields $[ \bu. (E.\z)]_{\sfN\ot\sfL^*} = (\be^s\bu_s.\z)_{\sfN\ot\sfL^*}= 0$.  Thus $E.\z\in\fk$, and $\fk$ is a $\fg_{\ge0}$--module.
\end{proof}

\begin{lemma} \label{L:k=alg}
The kernel $\fk$ is a subalgebra of $\fgl(V)$.
\end{lemma}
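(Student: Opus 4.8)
The plan is to prove that $\fk$ is closed under the Lie bracket of $\fgl(V)$; since $\fg \subset \fk$ this exhibits $\fk$ as a Lie algebra containing $\fg$. Fix $\z_1,\z_2\in\fk$; I must check $(\bu.[\z_1,\z_2])^\mu_0 = 0$ for every $\bu\in\tSym^\bullet\fg_- = U(\fg_-)$ and every normal index $\mu$. The first step is Hopf-algebraic: the bracket $\fgl(V)\ot\fgl(V)\to\fgl(V)$ is a map of $\fg$-modules, and because $\fg_-$ is a Lie subalgebra, $U(\fg_-)$ is a cocommutative Hopf subalgebra of $U(\fg)$, so the coproduct gives
\[
  \bu.[\z_1,\z_2] \ = \ \sum_{(\bu)} [\,\bu_{(1)}.\z_1\,,\ \bu_{(2)}.\z_2\,]\,,
  \qquad \bu_{(1)},\bu_{(2)}\in U(\fg_-)\,.
\]
Since $\fk$ is $U(\fg_-)$-stable (immediate from Definition \ref{D:k}, and part of Lemma \ref{L:k=gmod}), every factor $\bu_{(i)}.\z_j$ again lies in $\fk$. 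Thus it suffices to control the $(\cdot)^\mu_0$-component of a single bracket of two elements of $\fk$.

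The principal obstacle is that $(\cdot)^\mu_0$ of a product couples components of the factors that the definition of $\fk$ does not see directly. For $\eta_1,\eta_2\in\fk$ one has $\eta_i{}^\mu_0 = 0$ (the $\bu = 1$ instance), so in $([\eta_1,\eta_2])^\mu_0 = (\eta_1\eta_2)^\mu_0 - (\eta_2\eta_1)^\mu_0$ the line and normal intermediate indices drop out and only the tangent ones survive:
\[
  ([\eta_1,\eta_2])^\mu_0 \ = \ \sum_\a \big( \eta_1{}^\mu_\a\,\eta_2{}^\a_0 \ - \ \eta_2{}^\mu_\a\,\eta_1{}^\a_0 \big)\,.
\]
This still involves the tangent-to-normal blocks $\eta_i{}^\mu_\a$, which I propose to eliminate using the $\bu = u_\a$ instance of the defining relations: expanding $0 = (u_\a.\eta)^\mu_0 = ([u_\a,\eta])^\mu_0$ with $u_\a.\sfv_0 = \sfv_\a$ and $\eta^\nu_0 = 0$ gives, for every $\eta\in\fk$,
\[
  \eta^\mu_\a \ = \ \sum_\c q^\mu_{\a\c}\,\eta^\c_0\,,
  \qquad q^\mu_{\a\c} \ := \ (u_\a)^\mu_\c \ = \ (u_\a u_\c)^\mu_0\,.
\]
Crucially these constants are symmetric, $q^\mu_{\a\c} = q^\mu_{\c\a}$, because their antisymmetric part is $([u_\a,u_\c])^\mu_0$, and $[u_\a,u_\c]\in\fg_-$ sends $\sfv_0$ into $\sfT = \fg_-.\sfv_0$, so has no $\sfN$-component. (These $q^\mu_{\a\c}$ are the homogeneous second Fubini coefficients of \eqref{E:r2}, spread across the full normal bundle.)

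Substituting the relation into the bracket formula, with $\eta_i = \bu_{(i)}.\z_i\in\fk$, and summing over the coproduct, I expect to reach
\[
  (\bu.[\z_1,\z_2])^\mu_0 \ = \ \sum_{\a,\c} q^\mu_{\a\c}\,\big( Q^{\z_1,\z_2}_{\c\a} - Q^{\z_2,\z_1}_{\c\a} \big)\,,
  \qquad Q^{\z,\z'}_{\c\a} := \sum_{(\bu)} (\bu_{(1)}.\z)^\c_0\,(\bu_{(2)}.\z')^\a_0\,.
\]
Cocommutativity of $U(\fg_-)$ gives $Q^{\z,\z'}_{\c\a} = Q^{\z',\z}_{\a\c}$; together with $q^\mu_{\a\c} = q^\mu_{\c\a}$, relabelling $\a\leftrightarrow\c$ shows the two terms in the parenthesis contribute equally, so the sum cancels and $[\z_1,\z_2]\in\fk$.

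I expect the main difficulty to be exactly the middle step: the definition constrains only the $\sfN\ot\sfL^*$-block, while a bracket feeds on the tangent-to-normal and line-to-tangent blocks, so the argument hinges on recovering the hidden identity $\eta^\mu_\a = q^\mu_{\a\c}\eta^\c_0$ and the symmetry of $q$. It is worth flagging that the final cancellation genuinely uses the antisymmetry of the Lie bracket: the analogous computation for the associative product gives $\sum_{\a\c} q^\mu_{\a\c} Q^{\z_1,\z_2}_{\c\a}$, which does not vanish in general — consistent with $\fk$ being a Lie, but not an associative, subalgebra of $\fgl(V)$.
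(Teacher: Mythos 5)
Your proof is correct, and its overall skeleton matches the paper's: both arguments use the $\fg_-$--module structure of $\fk$ to reduce the claim to the single identity $[\fk,\fk]_{\sfN\ot\sfL^*}=0$ (your coproduct expansion of $\bu.[\z_1,\z_2]$ into brackets of elements of $\fk$ is the Hopf-algebraic form of the paper's inductive observation that $\bu.[\fk,\fk]\subset[\fk,\fk]$). Where you genuinely diverge is in how that identity is established. The paper argues by substitution: the $\bu=1$ instance of Definition \ref{D:k} gives $\z(\sfL)\subset\sfL\op\sfT=\fg_{\le 0}(\sfL)$ for $\z\in\fk$, so one may replace $\z$ by a representative $\z'\in\fg_{\le0}$ whenever it acts on $\sfL$; two such replacements, together with Lemma \ref{L:k=gmod}, reduce $[\z,\x](\sfL)$ modulo $\sfL\op\sfT$ to $[\x',\z'](\sfL)$, which lies in $\sfL\op\sfT$ because $[\x',\z']\in\fg_{\le0}$. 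You instead extract from the $\bu=u_\a$ instances the pointwise relation $\eta^\mu_\alpha=q^\mu_{\alpha\gamma}\,\eta^\gamma_0$ for every $\eta\in\fk$, with $q^\mu_{\alpha\gamma}=(u_\alpha u_\gamma)^\mu_0$ symmetric, and then win by playing the symmetry of $q$ (essentially of the second Fubini form) against the antisymmetry of the bracket and the cocommutativity of $U(\fg_-)$. Both routes are sound; yours is more computational but makes the cancellation mechanism explicit -- in particular your closing observation that the argument fails for the associative product, so that $\fk$ is a Lie but not an associative subalgebra, is a genuine insight that the paper's more structural phrasing conceals -- while the paper's version avoids coordinates at the cost of invoking Lemma \ref{L:k=gmod} twice more inside the computation.
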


\begin{proof}
Let $[\fk , \fk] = \tspan\{ [\z,\x] \ | \ \z,\x\in\fk \}$.  In order to show that $\fk$ is a subalgebra, we must show that $(\bu.[\fk,\fk])_{\sfN\ot\sfL^*} = 0$ for all $\bu \in \tSym^\bullet \fg_-$.  Let $u \in \fg_-$ and $\z,\x\in\fk$.  Then \begin{eqnarray*}
  u\,.\,[\z,\x] & = & \left[ u \,,\, [\z,\x] \right] 
  \ = \ \left[ \z \,,\, [\x,u] \right] \,+\, \left[ \x \,,\, [u,\z]\right] \, .
\end{eqnarray*}
By Lemma \ref{L:k=gmod} both $[\xi,u]$ and $[u,\z]$ lie in $\fk$.  Thus $u . [\fk,\fk] \subset [\fk,\fk]$.  Inductively, $\bu . [\fk , \fk] \subset [\fk,\fk]$ for all $\bu \in \tSym^\bullet \fg_-$.  So, to prove the lemma, it suffices to show that 
\begin{equation} \label{E:[k,k]}
  [\fk,\fk]_{\sfN\ot\sfL^*} \ = \ 0 \, .
\end{equation}

Given $\z \in \fgl(V)$, note that $\z_{\sfN\ot\sfL^*} = 0$ if and only if $\z(\sfL) \subset \sfL\op\sfT$.  Since $\sfL\op\sfT = \fg_{\le0} (\sfL)$, this is is equivalent to the existence of $\z'\in\fg_{\le0}$ such that $\z(\sfL) = \z'(\sfL)$.  Let $\z,\xi\in\fk$.  
\begin{eqnarray*}
  [\z,\x](\sfL) & = & \z\x (\sfL) \,-\, \x\z(\sfL)
  \ = \ \z( \x'\sfL) \,-\, \x( \z'\sfL) \\
  & = & [\z,\x'](\sfL) \,+\, \x'\z(\sfL)
  \,-\, [\x,\z'](\sfL) \,-\, \z'\x(\sfL) \, . 
\end{eqnarray*}  
By Lemma \ref{L:k=gmod}, $[\z,\x']$ and $[\x,\z']$ are elements of $\fk$.  Thus, $[\z,\x'](\sfL) - [\x,'\z](\sfL) \in \sfL\op\sfT$.  Also, 
$$
  \x'\z(\sfL) \,-\, \z'\x(\sfL) 
  \ = \ \x'\z'(\sfL) \,-\, \z'\x'(\sfL)
  \ = \ [\x',\z'](\sfL) \ \subset \ \sfL\op\sfT \, 
$$
since $[\x' \,,\, \z'] \in \fg_{\le0}$.  Thus $[\z,\x](\sfL) \subset \sfL\op\sfT$.  It follows that $[\fk , \fk] (\sfL) \subset \sfL \op \sfT$, establishing \eqref{E:[k,k]} and the lemma.
\end{proof}

Let $K \subset \tGL(V)$ be the connected subgroup with Lie algebra $\fk$.  Then the action of $K$ preserves $Z$.  Define $\cK := K\cdot \sfv$.  From Definition \ref{D:k} and Lemma \ref{L:key} we deduce

\begin{lemma} \label{L:cK}
$\cK$ is the maximal connected sub-bundle of $\cF_Z$, containing $\cG$, on which the coefficients of the Fubini forms are constant.
\end{lemma}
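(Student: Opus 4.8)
The plan is to prove Lemma \ref{L:cK} by verifying two properties of $\cK := K\cdot\sfv$: first, that the coefficients of the Fubini forms are constant on $\cK$; and second, that $\cK$ is maximal among connected sub-bundles of $\cF_Z$ with this property (and containing $\cG$). The containment $\cG \subset \cK$ is immediate from \eqref{E:gink}, since $\fg \subset \fk$ forces $G \subset K$, hence $\cG = G\cdot\sfv \subset K\cdot\sfv = \cK$.

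For constancy of the Fubini coefficients on $\cK$, I would run the same argument used in Corollary \ref{C:cnst}, now with $K$ in place of $G$. The key point is that Definition \ref{D:k} is exactly engineered so that the Maurer-Cartan computation of Lemma \ref{L:key} extends from $\cG$ to $\cK$: for each $\bu \in \tSym^\bullet\fg_-$ the condition $(\bu.\z)_{\sfN\ot\sfL^*}=0$ for all $\z\in\fk$ says that, reading the forms $\psi^\m_{\a_1\cdots\a_k}$ via the identity \eqref{E:suf} as $-(\bu.\w)^\m_0$, the $\sfN\ot\sfL^*$--component of the $\fk$--valued Maurer-Cartan form of $K$ vanishes after acting by any $\bu$. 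Since the Maurer-Cartan form of $K$ is $K$--invariant and $\fk$--valued, the same left-invariance argument as in Corollary \ref{C:cnst} shows the $r^\m_{\a_1\cdots\a_k}$ are $L_g$--invariant for $g\in K$, hence constant on $\cK$.

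For maximality, suppose $\cF' \subset \cF_Z$ is a connected sub-bundle containing $\cG$ on which all Fubini coefficients are constant; I would show $\cF'\subset \cK$ by showing that the tangent directions to $\cF'$ at $\sfv$ correspond to elements of $\fk$. Concretely, an infinitesimal frame change lies in $\fgl(V)$, and demanding that moving in that direction not change any $r^\m_{\a_1\cdots\a_k}$ translates, via the formulas \eqref{E:F2}--\eqref{E:F} and the identification \eqref{E:suf}, into precisely the vanishing conditions $(\bu.\z)_{\sfN\ot\sfL^*}=0$ of Definition \ref{D:k}. Thus the stabilizer-type condition cutting out admissible directions is exactly $\fk$, so the corresponding connected subgroup is $K$ and $\cF' \subset K\cdot\sfv = \cK$.

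The main obstacle I expect is making the maximality direction rigorous: one must argue that ``the Fubini coefficients are locally constant along $\cF'$'' forces the infinitesimal frame motions to satisfy \emph{all} the conditions of Definition \ref{D:k} simultaneously, not merely the low-order ones. This requires carefully tracking how the recursion \eqref{E:F} propagates the constancy constraint through every graded order, and checking that the linear conditions assembled across all $\bu\in\tSym^\bullet\fg_-$ exactly recover $\fk$ rather than some larger space. The computations of Lemma \ref{L:key} and the example preceding its proof should supply the needed dictionary between the $(\bu.\w)^\m_0$ and the $r^\m_{\a_1\cdots\a_k}$, so the remaining work is organizational: verifying that imposing constancy of Fubini coefficients is equivalent to the defining kernel conditions, order by order.
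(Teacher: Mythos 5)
Your proposal is correct and follows exactly the route the paper intends: the paper offers no written proof, stating only that the lemma is deduced from Definition \ref{D:k} and Lemma \ref{L:key}, and your two steps (constancy on $\cK$ because $\w|_{\cK}$ is $\fk$--valued so $(\bu.\w)^\m_0=0$ by the definition of $\fk$, plus the left-invariance argument of Corollary \ref{C:cnst}; maximality because, with the coefficients held constant, the equations \eqref{E:F2}--\eqref{E:F} become left-invariant linear conditions on $\w$ that, via \eqref{E:suf}, cut out precisely $\fk\subset\fgl(V)$) are exactly that deduction made explicit.
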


\noindent A corollary of Lemmas \ref{L:k=alg} \& \ref{L:cK} is that $K\supset G$ is the maximal connected subgroup of $\tGL(V)$ preserving $Z$.  Let $G_0 \subset K$ be a maximal semi-simple subgroup of $K$ containing $G$.  Then $G/P = G_0/P_0$ where $P_0$ is the parabolic subgroup of $G_0$ stabilizing the line $[\sfv_0] \in \bP V$.  
\begin{center}
\emph{Without loss of generality we assume that $G = G_0$.}
\end{center}

\begin{remark*}
By \cite{MR0039736, MR0150238}, when $G$ is simple, we have $G=G_0$ with the following exceptions:
\begin{circlist}
  \item $C_n \subset A_{2n-1}$ and $Z = \bC\bP^{2n-1}$;
  \item $G_2 \subset B_3$ and $Z \subset \bC\bP^6$ is the quadric hypersurface;
  \item $B_r \subset D_{r+1}$ and $Z \simeq B_r/P_r = D_{r+1}/Q_{r+1}$ is the Spinor variety.
\end{circlist}
From Table \ref{t:max_par}, we see that taking the larger group $G_0$, yields the smaller value for $\ttq$.
\end{remark*}
 
\subsection{Filtered systems} \label{S:filt_sys}

In order to prove Theorem \ref{T:main} we will realize the bundle $\wt \cF_{X}$ of Definition \ref{D:agree} as an integral manifold of a filtered Pfaffian system $\mathrm{Filt}_{p+2}(\fg_-\,,\,\fg^\perp_{\le p})$ whose rigidity (cf. Theorem \ref{T:filt}) is studied in \cite{LRrigid}.

\begin{definition} \label{D:filtsys}
Let $\Sigma$ be any manifold equipped with a filtration of its cotangent bundle 
$$
  \{0\} \,=\, T^*_a \,\subset\, T^*_{a+1} \,\subset\, T^*_{a+2}
  \,\subset\, \cdots \,\subset\, T^*_{b-1} \,\subset \,T^*_{b} 
  \,=\, T^*\Sigma \, ,
$$
$a < b \in \bZ$.  By convention $T^*_c = \{0\}$ for all $c \le a$, and $T^*_c = T^*\Sigma$ for all $c \ge b$.  Let $I \subset J \subset T^*\Sigma$ be sub-bundles of constant rank.  The filtration of $T^*\Sigma$ induces filtrations of $I$ and $J$.  Let $0<r\in\bZ$.  Integral manifolds $M$ of the \emph{$r$-filtered Pfaffian system} $\mathrm{Filt}_r(I,J)$ are the immersed submanifolds $i: M\to \Sigma$ such that 
$$
  i^*(I_c) \ \equiv \ 0 \quad \tmod \ i^*(J_{c-r}) \, ,
$$
for all $c$, and $i^*(J)=T^*M$ (the independence condition).  
\end{definition}

\begin{remark*}
The classical Pfaffian systems $(I,J)$ are given by taking the trivial filtration of $T^*\Sigma$ (that is, $b-a=1$) and $r=1$.
\end{remark*}

We now construct a filtered Pfaffian system on $\tGL(V)$ as follows.  Fix a $\fg$--module decomposition
$$
  \fgl(V) \ = \ \fg \,\op\, \fg^\perp \, .
$$
Let 
$$
  \fg \ = \ \op_{d=-\sfa}^\sfa \,\fg_d \,,\quad
  \fg^\perp \ = \ \op_{d=-\ttq}^\ttq \,\fg_d^\perp 
  \quad \hbox{ and } \quad
  \fgl(V) \ = \ \op_{d=-\ttq}^\ttq \,\fgl(V)_d 
$$
denote the $\ttE$--graded decompositions.  Define a filtration 
$$
  \{0\} \ \subset \ T^*_{-\sfq,\tId} \ \subset \ \cdots \ 
  \subset \ T^*_{\sfq,\tId} \ = \ T^*_\tId \tGL(V)
$$
of the cotangent space by 
$$
  T^*_{c,\tId} \ = \ \tAnn( \op_{d>c}\,\fgl(V)_d ) \, .
$$
Extend this to a filtration of the cotangent bundle $T^*\tGL(V)$ by setting $T^*_{c,A} = L_A{}^*(T^*_{c,\tId})$ for $A \in \tGL(V)$.  Given $p\in\bZ$, let $I \subset J \subset T^*\tGL(V)$ be the sub-bundles framed by $\w_{\fg^\perp_{\le p}}$ and $\w_{\fg^\perp_{\le p}} + \w_{\fg_{-}}$, respectively.  In a mild abuse of notation, let $\mathrm{Filt}_r(\fg_- \,,\, \fg^\perp_{\le p}) := \mathrm{Filt}_r(I,J)$ denote the corresponding filtered Pfaffian system on $\tGL(V)$.  

\begin{remark*} 
Any integral manifold of the filtered system $\mathrm{Filt}_r(\fg_- \,,\, \fg^\perp_{\le p})$ may be identified with an integral manifold of a linear Pfaffian system as follows.  Let $\fg^\perp \ot \fg_+ = \op_d \, (\fg^\perp \ot \fg_+)_d$ be the $\ttE$--graded decomposition.  Define
$$
  \Sigma_r \ = \ \tGL(V) \times (\fg^\perp \ot \fg_+)_{\ge r} \, .
$$
Let $\lambda$ be coordinates on $(\fg^\perp \ot \fg_+)_{\ge r}$, and write $\lambda = (\lambda_\ttq , \ldots , \lambda_{-\ttq})$, where $\lambda_s$ denotes the coordinates on $\fg^\perp_s \ot \fg_{\ge r-s}$.  Keeping in mind the identification $\fg_+ \simeq \fg_-^*$, define a linear Pfaffian system on $\Sigma_r$ defined by  
\begin{equation}\label{E:prolong}
  0 \ = \  \w_{\fg^\perp_s} - \lambda_s(\w_{\fg_-}) \,,
  \quad \hbox{for all } s \le p \,, 
\end{equation}
with independence condition
$$
  \Omega \ = \ \w^1_0 \wedge\cdots\wedge \w^m_0 \, .
$$
Any integral manifold $M$ of $\mathrm{Filt}_r(\fg_-\,,\,\fg^\perp_{\le p})$ lifts uniquely to an integral manifold \eqref{E:prolong}.  Conversely, any integral manifold of \eqref{E:prolong} is locally the lift of an integral manifold of the filtered system.
\end{remark*}  

\begin{definition} \label{D:filtrig}
The system $\mathrm{Filt}_r(\fg_- \,,\, \fg^\perp_{\le p})$ is \emph{rigid} if, for any connected integral manifold $M \subset \tGL(V)$, there exists $A \in \tGL(V)$ such that $L_A(M) \subset \cG$.  In particular, if $M$ is a sub-bundle of $\cF_X$ for some variety $X \subset \bP V$, then $A \cdot X \subset Z \simeq G/P$.
\end{definition}

The filtered system $\mathrm{Filt}_{p+2}(\fg_- \,,\, \fg^\perp_{\le p})$ is denoted by $(I^\sff_p \,,\, \Omega)$ in \cite{LRrigid}.\footnote{Be aware that the definition of an $r$-filtered Pfaffian system in this paper differs slightly from that in \cite{LRrigid}.  As a consequence $(I^\sff_p,\Omega)$ is a ``$(p+1)$--filtered system'' in \cite{LRrigid}, but a ``$(p+2)$--filtered system'' here.}

\begin{theorem}[{\cite[Theorems 1.7 \& 7.2]{LRrigid}}] \label{T:filt}
Let $G$ be a complex semi-simple Lie group and $G/P \hookrightarrow \bP V$ be a homogeneous embedding.  
\begin{a_list}
\item If no factor of $G/P$ corresponds to a quadric hypersurface or $A_r/P_\ttJ$, with $1$ or $r$ in $\ttJ$, then the system $\mathrm{Filt}_1(\fg_- \,,\, \fg^\perp_{\le -1})$ is rigid.
\item If no factor of $G/P$ corresponds to $A_r/P_\ttJ$, with $1$ or $r$ in $\ttJ$, then the system $\mathrm{Filt}_2(\fg_- \,,\, \fg^\perp_{\le 0})$ is rigid.
\end{a_list}
\end{theorem}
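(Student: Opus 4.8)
The plan is to establish rigidity by passing to the prolonged \emph{linear} Pfaffian system \eqref{E:prolong} and then showing, inductively in the $\ttE$--graded degree, that an integral manifold is forced to agree with the model $\cG$ in every degree. Fix a connected integral manifold $M \subset \tGL(V)$ of $\mathrm{Filt}_{p+2}(\fg_-,\fg^\perp_{\le p})$. After a left translation $L_A$ I may assume $M$ passes through the base frame $\sfv$; by Lemma~\ref{L:key} and Corollary~\ref{C:cnst} the model $\cG = G\cdot\sfv$ is itself such an integral manifold, on which the Fubini coefficients are the constants $r_{|\cG}$. The target is to prove that on $M$ these coefficients coincide with $r_{|\cG}$ in all graded degrees, for then $M$ and $\cG$ have identical Fubini forms to all orders and hence coincide by the fundamental theorem for moving frames (cf.\ \cite{MR2474431}); this is exactly the assertion $L_A(M)\subset\cG$ of Definition~\ref{D:filtrig}.

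First I would differentiate the defining $1$--forms $\theta_s := \w_{\fg^\perp_s} - \lambda_s(\w_{\fg_-})$, $s\le p$, using the Maurer--Cartan equation \eqref{E:mc} and the $\fg$--module splitting $\fgl(V) = \fg\op\fg^\perp$. Modulo the algebraic ideal generated by the $\theta_s$ and by $\w_{\fg_+}$, the structure equations take the schematic form
\[
  \td\theta_s \ \equiv \ -\,\eta_s\wedge\w_{\fg_-} \ + \ (\delta\lambda)_s \,,
\]
where the $\eta_s$ are the tableau $1$--forms (the covariant differentials of the new coordinates $\lambda_s$) and, under the identification $\fg_+\simeq\fg_-^*$, the quadratic torsion term $(\delta\lambda)_s$ is the image of $\lambda$ under the Chevalley--Eilenberg differential $\delta$ of the complex $\La{\bullet}\fg_+\ot\fg^\perp$ computing the Lie algebra cohomology $H^\bullet(\fg_-,\fg^\perp)$. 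Consequently the symbol, the torsion absorption, and the prolongation of \eqref{E:prolong} in each graded degree are controlled by $\ker\delta$ and $\tim\delta$, that is, by the cohomology $H^\bullet(\fg_-,\fg^\perp)$ graded by $\ttE$.

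Next I would run the rigidity induction on the graded degree $s$. The filtered system imposes $\w_{\fg^\perp_s}=\lambda_s(\w_{\fg_-})$ only for $s\le p$; the higher coefficients $\lambda_s$, $s>p$, are determined successively from the displayed structure equation. At each step the next coefficient is pinned down uniquely, up to a class in $H^1(\fg_-,\fg^\perp)_s$: the torsion can be absorbed in exactly one way when that group vanishes, forcing $\lambda_s$ to equal its value on $\cG$. Since $\fgl(V)=\fg\op\fg^\perp$ as $\fg_-$--modules, and the contribution of $\fg$ to $H^1(\fg_-,\fgl(V))$ corresponds to the infinitesimal symmetries already present along $\cG$ (the left $G$--action and its prolongations, which do not move $X$ off $Z$ projectively), rigidity reduces to the single vanishing statement
\[
  H^1(\fg_-,\fg^\perp)_s \ = \ 0 \qquad\text{for all } s>p \,.
\]
The thresholds $p=-1$ in (a) and $p=0$ in (b) place the critical degree at $s=p+1$.

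Finally---and here is where the real work and the main obstacle lie---I would compute these graded cohomology groups by Kostant's theorem \cite{MR0142696}. Decomposing $\fgl(V)=V\ot V^*$ into irreducible $\fg$--modules $W$ and applying Kostant's description, $H^k(\fg_-,W)$ is the sum of the $\fg_0$--modules of highest weight $w\cdot\mu_W$, as $w$ ranges over the length-$k$ elements of the Hasse diagram $W^\fp$ (here $\cdot$ is the affine action and $\mu_W$ the highest weight of $W$); the $\ttE$--graded degree of each class is $\ttE$ evaluated on that weight. The vanishing $H^1(\fg_-,\fg^\perp)_s=0$ for $s>p$ thereby becomes a finite, uniform check over all Dynkin types and all $P=P_\ttJ$ on these length-one Weyl data, and I expect the crux of the argument to be identifying precisely which $G/P$ produce a surviving class in the critical degree $s=p+1$. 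The excluded configurations in the statement---quadric hypersurface factors for part (a), and $A_r/P_\ttJ$ with $1$ or $r\in\ttJ$ for both parts---should be exactly the cases in which a distinguished length-one element contributes a nonzero $H^1$ in degree $p+1$; ruling them out yields the vanishing, and the degree-by-degree matching together with connectedness of $M$ then gives $L_A(M)\subset\cG$, completing the proof.
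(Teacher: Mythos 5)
First, a point of order: this paper does not prove Theorem \ref{T:filt} at all --- it is quoted from the companion paper \cite{LRrigid} (Theorems 1.7 and 7.2 there), so there is no in-paper proof to compare you against. Your outline does track the strategy that the introduction attributes to \cite{LRrigid}: pass to the prolonged linear system \eqref{E:prolong}, identify the obstruction to absorbing torsion degree by degree with a graded Lie algebra cohomology group, and compute that group via Kostant \cite{MR0142696}. So the skeleton is the right one.

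As a proof, however, the proposal has two genuine gaps. The step where you write $\td\theta_s \equiv -\eta_s\wedge\w_{\fg_-} + (\delta\lambda)_s$ with $\delta$ the Chevalley--Eilenberg differential of $\La{\bullet}\fg_+\ot\fg^\perp$ is exactly the point that fails for a classical EDS when $G/P$ is not CHSS: there $\fg_-$ is nonabelian, so $\delta$ contains a bracket term $\lambda([\cdot,\cdot])$ that does not arise as torsion of the naive graded system, and arranging for it to appear is the entire reason the filtered systems $\mathrm{Filt}_{p+2}(\fg_-,\fg^\perp_{\le p})$ were introduced (the offset $p+2$ is what allows the bracket contributions from lower filtration levels to be treated as already determined). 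You assert this identification schematically but never verify it, so the reduction of rigidity to $H^1(\fg_-,\fg^\perp)_s=0$ for $s>p$ is not established. Second, even granting that reduction, the actual content of the theorem is the vanishing itself: the Kostant computation over all Dynkin types and all $\ttJ$, showing that the only length-one Hasse-diagram elements contributing to $H^1(\fg_-,\fg^\perp)$ in degree $\ge p+1$ occur precisely for quadric factors (in degree $0$) and for $A_r/P_\ttJ$ with $1$ or $r\in\ttJ$ (in positive degree). You explicitly defer this ``finite, uniform check,'' but without it nothing is proved --- the exclusions in the statement are an output of that computation, not something one can infer in advance. A smaller gap: concluding $L_A(M)\subset\cG$ from agreement of all Fubini coefficients requires first showing that $\w|_M$ is $\fg$-valued (via Definition \ref{D:k} and the normalization $G=G_0$) before integrating the Maurer--Cartan equation; this is routine but should be said rather than attributed to a ``fundamental theorem.''
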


Theorem \ref{T:main} will follow from Theorem \ref{T:filt} and the following

\begin{proposition} \label{P:key}
Fix $p \ge -\ttq$, and set $d = \ttq+p+1$.  Let $X \subset \bP V$, and suppose that $X$ agrees with $G/P \inj \bP V$ to graded-order $d$.   Let $\wt\cF_X \subset \cF_X$ be the sub-bundle of Definition \ref{D:agree}.   Then $\wt\cF_X$ is an integral manifold of $\mathrm{Filt}_{p+2}(\fg_- \,,\, \fg^\perp_{\le p})$.  

In particular, if the filtered system is rigid, then $G/P \inj \bP V$ is graded-Fubini-Griffiths-Harris rigid at graded-order $\ttq+p+1$.
\end{proposition}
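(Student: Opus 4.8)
The plan is to verify directly that the immersion $i:\wt\cF_X \hookrightarrow \tGL(V)$ meets the two requirements of Definition \ref{D:filtsys} for the system $\mathrm{Filt}_{p+2}(\fg_-\,,\,\fg^\perp_{\le p})$: the filtered congruences $i^*(I_c) \equiv 0 \pmod{i^*(J_{c-p-2})}$ for all $c$, together with the independence condition $i^*(J) = T^*\wt\cF_X$. Since $I$ is framed by $\w_{\fg^\perp_{\le p}}$ and $J$ by $\w_{\fg^\perp_{\le p}} + \w_{\fg_-}$, and the $\ttE$--filtration of $T^*\tGL(V)$ places $\w_{\fgl(V)_d}$ at filtration level $d$, the congruences reduce to a single family of statements: for each $s \le p$, the pullback $\w_{\fg^\perp_s}$ is congruent to $0$ modulo the forms $\w_{\fg_-}$ and $\w_{\fg^\perp}$ of $\ttE$--degree at most $s-p-2$. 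Equivalently, via the prolonged form \eqref{E:prolong}, I must produce functions $\lambda_s$ on $\wt\cF_X$, valued in $\fg^\perp_s \ot \fg_{\ge (p+2)-s}$, with $\w_{\fg^\perp_s} = \lambda_s(\w_{\fg_-})$ for all $s \le p$.

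First I would translate the hypothesis of agreement into statements about the pulled-back Maurer--Cartan form. On $\cG$ the form $\w$ is $\fg$--valued, so $\w_{\fg^\perp}$ vanishes identically and, by Lemma \ref{L:key}, the recursive identities \eqref{E:F2} and \eqref{E:F} coincide with the vanishing relations $(\bu.\w)^\mu_0 = 0$ for $\bu \in U^k(\tAd_g \fg_-)$. On $\wt\cF_X$ the same recursion \eqref{E:F} remains available, but with $\w$ now valued in all of $\fgl(V) = \fg \op \fg^\perp$; the assumption that $X$ agrees with $G/P$ to graded-order $d$ forces the Fubini coefficients $\tilde r^\mu_{\a_1\cdots\a_k}$ to equal the constant, $\ttE$--homogeneous model values $\sfr^\mu_{\a_1\cdots\a_k}$ on every graded piece of graded-degree $\le d$. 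Feeding these matched coefficients back through \eqref{E:F2}--\eqref{E:F} and separating into $\ttE$--graded components shows that, up through graded-degree $d$, the only surviving deviation of $\w$ from its $\fg$--valued model is carried by the semibasic forms $\w_{\fg_-}$: each low-degree $\fg^\perp$--component of $\w$ is expressed as $\w_{\fg_-}$ contracted against a combination of $\tilde r - \sfr$, which vanishes in the matched range.

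The crux is the degree bookkeeping that converts ``matched to graded-order $d$'' into ``$\fg^\perp_{\le p}$--congruences with offset $p+2$.'' Here the shift by $\ttq$ enters through Corollary \ref{C:deg}: the normal directions $\w^\mu_0$ occupy $\ttE$--degrees $-2,\ldots,-\ttq$, so the graded Fubini form $\sfF_c$ pairs a normal factor of $\ttE$--degree $-c$ with a symmetric tangential factor of $\ttE$--degree $c$. Tracking how each application of the exterior derivative in \eqref{E:F} raises the polynomial order by one while lowering the $\ttE$--degree of the governing $\fg^\perp$--component by one, one finds that control of $\w_{\fg^\perp_s}$ modulo degree $\le s-p-2$ is achieved exactly once the Fubini coefficients agree through graded-order $\ttq+s+1$; taking $s \le p$, this is guaranteed by agreement to graded-order $d = \ttq+p+1$. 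This is the step I expect to be the main obstacle, since it requires matching the symmetrizations and binomial terms of \eqref{E:F} against the $\ttE$--graded splitting of $\fgl(V)$ and checking that the resulting $\lambda_s$ genuinely land in $\fg^\perp_s \ot \fg_{\ge (p+2)-s}$. The independence condition $i^*(J) = T^*\wt\cF_X$ is comparatively routine: on $\cF_X$ the forms $\w_{\fg_-}$ are semibasic and frame the directions along $X$, while the residual fiber directions of the reduction $\wt\cF_X$ are coframed by the retained $\w_{\fg^\perp_{\le p}}$, so together they coframe $\wt\cF_X$.

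Finally, granting that the filtered system is rigid (Theorem \ref{T:filt}), Definition \ref{D:filtrig} supplies $A \in \tGL(V)$ with $L_A(\wt\cF_X) \subset \cG$; since $\wt\cF_X$ is a sub-bundle of $\cF_X$ over the general points of $X$, this forces $A \cdot X \subset Z \simeq G/P$, so $X$ is projectively equivalent to $G/P$. By Definition \ref{D:agree} this is precisely graded-Fubini--Griffiths--Harris rigidity at graded-order $\ttq + p + 1$, establishing the ``in particular'' clause.
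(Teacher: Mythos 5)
Your overall strategy is the same as the paper's: reduce to the prolonged linear system \eqref{E:prolong}, use Lemma \ref{L:key} (via Definition \ref{D:k}) to translate the equations $\w_{\fg^\perp_s}=0$ into the vanishing statements $(\bu.\w_s)_{\sfN\ot\sfL^*}=0$, and then do $\ttE$--graded bookkeeping on \eqref{E:F} to show that agreement of the Fubini coefficients forces the required congruences. The framing of the target (producing $\lambda_s$ valued in $\fg^\perp_s\ot\fg_{\ge (p+2)-s}$ with $\w_{\fg^\perp_s}=\lambda_s(\w_{\fg_-})$), the treatment of the independence condition, and the deduction of the ``in particular'' clause from Definition \ref{D:filtrig} are all fine.

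However, the step you yourself flag as the crux contains a genuine miscount. You claim that $\w_{\fg^\perp_s}$ is controlled modulo degree $\le s-p-2$ as soon as the Fubini coefficients agree through graded-order $\ttq+s+1$. This cannot be right: the depth of the required congruence depends on $p$, yet your threshold $\ttq+s+1$ does not, and for very negative $s$ (where $J_{s-p-2}$ is nearly or entirely zero, so the congruence is nearly the exact equation $\w_{\fg^\perp_s}=0$) it would assert that almost no agreement is needed. The correct exchange rate, which is what the paper extracts from \eqref{E:F}, is that agreement through graded-order $s+\ttq+\ell$ (with $1\le\ell\le\tta$) yields only $\w_{\fg^\perp_s}=\varsigma\big(\w_{\fg_{-(\ell+1)}}+\cdots+\w_{\fg_{-\tta}}\big)$ with $\varsigma$ valued in $\fg^\perp_s\ot(\fg_{\ell+1}\op\cdots\op\fg_{\tta})$; matching this against $\fg^\perp_s\ot\fg_{\ge p+2-s}$ forces $\ell\ge p+1-s$, i.e. graded-order $s+\ttq+(p+1-s)=\ttq+p+1$ \emph{uniformly in $s$}. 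Lowering $s$ does not relax the requirement, because the congruence must then be pushed to correspondingly lower degree; the two effects cancel exactly, which is why the single threshold $d=\ttq+p+1$ appears. (Your count happens to coincide with the correct one only when $\tta=1$, i.e. in the CHSS case -- precisely the case this paper is generalizing away from.) Your final conclusion is the right one, but as written the argument establishes a strictly stronger, and false, per-component statement; the degree bookkeeping needs to be redone along the lines above.
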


\begin{proof}
Let
$$
  \fgl(V) \ = \ \fgl(V)_\ttq \ \op \cdots \op \fgl(V)_{-\ttq} \,, \quad
  \ttq = \pi(\ttE) + \pi^*(\ttE) \, 
$$
be the $\ttE$--graded decomposition.  Let $\w_s$ denote the component of $\w$ taking value in $\fgl(V)_s$.  By Definition \ref{D:k}, the equation $\w_{\fg^\perp_s} = 0$ is equivalent to 
$$
   0 = (\bu\,.\,\w_s)_{\sfN\ot\sfL^*} \quad\hbox{for all}\quad
    \bu \in \tSym^\bullet \fg_-\, .
$$
If $\bu \in \tSym_{<-(s+\ttq)}\,\fg_-$, then $\bu \,.\, \w_s \in \fgl(V)_{<-\ttq} = \{ 0\}$; thus, the equation $0 =(\bu\,.\,\w_s)_{\sfN\ot\sfL^*}$ is trivial.  In particular, 
$$
  0 \ = \ ( \bu \,.\, \w)_{\sfN\ot\sfL^*} \ 
  \forall \ \bu \in \tSym_{\ge-(s+\ttq)} \, \fg_-\quad \Longrightarrow \quad
  \w_{\fg^\perp_s} \ = \ 0 \, .
$$
It follows from Lemma \ref{L:key} and inspection of \eqref{E:F} that, in order to impose $\w_{\fg^\perp_s} = 0$, it suffices to specify the Fubini forms up to graded-order $s+\ttq+\tta$, cf. \eqref{E:Zg}.

Upon closer inspection of \eqref{E:F}, we see that specifying the Fubini forms up to graded-order $s+\ttq+\ell$, with $1 \le \ell \le \tta$, yields $\w_{\fg^\perp_s} = \varsigma(\w_{\fg_{-(\ell+1)}} + \cdots + \w_{\fg_{-\tta}})$ for functions $\varsigma$ taking value in $\fg^\perp_s \ot (\fg_{\ell+1} \op \cdots \op \fg_\tta)$.  Comparing with \eqref{E:prolong}, we see that $\wt\cF_X$ is an integral manifold of the system $\mathrm{Filt}_{p+2}(\fg_- \,,\, \fg^\perp_{\le p})$.
\end{proof}

\section{An invariant description of the Fubini forms for \boldmath $G/P \hookrightarrow \bP V$ \unboldmath }

In this section we give the invariant description of the Fubini forms that was promised in Section \ref{S:I4}.

\subsection{Fubini forms}\label{S:F5}

Given $x \in \bP V$,  let $\hat x \subset V$ denote the corresponding line.  Let $X\subset \BP  V$ be a projective variety and let $x\in X$ be a smooth point.   The line bundle with fibre $L_x = \hat x$ over $x \in X$ is $L=\cO_{X}(-1)$.     The quotient map $ V^*\to V^*/\tAnn(\hat x) = \cO_{\bP  V}(1)_x$ gives rise to a sequence of maps defined as follows:   Let $\fm_x \subset \cO_{\bP  V, x}$ denote the maximal ideal of functions vanishing at $x$ to first order.  Recall that $T^*_xX:=\fm_x/\fm_x^2$.  More generally, $\tSym^kT^*_xX=\fm_x^k/\fm_x^{k+1}$.  

Let $\BF_x^0: V^* \to \cO_{X,x}(1)/\fm_x(1)$ and $\BF_x^1: \tker (\BF_x^0) \to \fm_x(1)/\fm_x^2(1)$ denote the natural projection maps.  Define $\BF^k_x : \tker(\BF_x^{k-1}) \to \fm_x^k(1)/\fm^{k+1}_x(1)$ inductively.  Set $N^*_{k,x}X(1)=\tker (\BF^{k-1}_x)$.  The \emph{$k$-th fundamental form} (twisted by $\cO_X(1)$) is the induced map
$$
\BF^k_x: N^*_{k,x}X(1)\to \tSym^kT^*_xX(1).
$$
The fundamental forms are well defined tensors at each smooth point of $X$, and are a subset of the Fubini forms.  They describe how $X$ is infinitesimally moving away from its $(k-1)$-st osculating space $\wh T\up{k-1}_xX:=(N^*_k(1))^\perp\subset V$ at order $k$.  

Recall the osculating sequence \eqref{oscseq}.  Let $(x^0,x^{\mu_1},x^{\mu_2}  \hd x^{\mu_f})$ be local coordinates on $\bP V$ adapted to the osculating filtration at a general point $x$.  The $x^{\mu_1}$ represent tangential coordinates, and we distinguish them by replacing the index $\mu_1$ with $\a$.  Locally $X$ is a graph over its tangent space at $x$.  The $x^{\mu_j}(x^{\a})$ have a Taylor series beginning at order $j$, and the coefficients of the $j$-th fundamental form are the leading terms of the Taylor series.  The other terms in the Taylor series contain geometric information but do not lead to such clearly defined tensors on $X$.  Rather they lead to the Fubini forms $F^k\in \tSym^kT^*_xX\ot N_xX$, a sequence of relative differential invariants which are defined on the bundle $\cF_X$ of first-order adapted frames.  

\begin{remark*}
It is possible to define a tensor on $X$ that contains essentially the same information of the $k$-th Fubini form.  One uses the $k$-th fundamental form of the $(k-1)$-st Veronese re-embedding of $X$.  See \cite{MR1420349} for details. 
\end{remark*}

\subsection{Fubini forms of \boldmath $G/P \hookrightarrow \bP V$ \unboldmath}

The universal enveloping algebra $U(\fg)= \fg^{\ot}/\{ \x\ot\z- \z\ot \x - [\x,\z]\}$ inherits a filtration from the tensor algebra.  Let $U(\fg)^k$ denote the $k$-th term in this filtration.  

The fundamental forms may be described by a commutative diagram as follows.   
\begin{circlist}
\item The osculating spaces of $Z$ at $o=[\sfv_0]$ are $\wh T\up k_oZ=U(\fg)^k.\sfv_0$.  Complete $\sfv_0$ to a basis $\sfv = (\sfv_0 , \ldots, \sfv_n)$ of $V$.  Let $(\sfv^0,\ldots,\sfv^n)$ denote the dual basis.  Define $U(\fg) \to \wh T^{(k)}_oZ \ot L_o^*$ by $\bu \mapsto (\bu.\sfv_0)\ot\sfv^0$.
\item The normal space of $Z$ at $o$ is $N_oZ = (V / \wh T_oZ)\ot L_o^*$, and the $k$-th normal space is $N^k_oZ = (\wh T^{(k)}_o Z \,/\, \wh T^{(k-1)}_oZ)\ot L_o^*$.  Let $T^{(k)}_oZ \ot L_o^* \to N^k_oZ$ be the induced quotient map.
\item The Poincar\'e-Birkoff-Witt theorem and the identification $\fg\to \fg/\fp =T_oZ$ define a map $U(\fg)^k \to \tSym^k(T_oZ)$. 
\end{circlist}
The resulting diagram below is commutative and characterizes $\BF^k$, cf. \cite[Proposition 2.3]{MR1966752}.
\begin{center} \setlength{\unitlength}{5pt}
\begin{picture}(40,12)
\put(-1,0){$\tSym^k(T_oZ) $}
\put(4,9){$U(\fg)^k$} 
\put(23,9){$(\wh T^{(k)}_o Z)\ot L_o^*$} 
\put(23,0){$N^k_oZ$}
\put(12,10){\vector(1,0){9}}
\put(12,0.5){\vector(1,0){9}} \put(16,1.5){$\BF^k$}
\put(7,7){\vector(0,-1){3}} \put(26,7){\vector(0,-1){3}}
\end{picture} \end{center}

This observation may be generalized to the Fubini forms of $Z$ as follows.  Assume the frame $\sfv = (\sfv_0,\ldots,\sfv_n) \in \cF_Z$ over $o = [\sfv_0]$ selected above respects the splitting \eqref{E:split0}.     
Define a map
\begin{equation} \label{ugmap}
\renewcommand{\arraystretch}{1.3}
\begin{array}{rcl}
  U(\fg)^{k-1} & \to & (V\ot V^*)\,\ot\,(V\ot V^*) \\
  \hbox{by} \quad \bu & \mapsto & \left(\bu.(\sfv_i\ot \sfv^j)\right) 
  \,\ot\, (\sfv_j\ot \sfv^i) \, .
\end{array}
\end{equation}
Here and throughout, repeated indices are to be summed over.  Because
$(\sfv_i\ot \sfv^j)\ot(\sfv_j\ot \sfv^i) = \tId\in \tEnd(\tEnd(V))$, the map \eqref{ugmap} is independent of the choice of basis $\sfv$.  The splitting \eqref{E:split0} induces a projection 
$$
  (V\ot V^*) \,\ot\, (V\ot V^*) \ \to \ (\sfN\ot \sfL^*) \,\ot\, (\sfL\ot \sfT^*)
   \,.  
$$
As an element of $N_oZ \ot \tSym^{k+1}(T^*_oZ)$, the $(k+1)$--st Fubini form may be viewed as a map $\tSym^k T_oZ \to (N_oZ)\ot (T_o^*Z)$.  Finally, making use of \eqref{E:sfId}, the Fubini form may be considered as a map $F^{k+1}_{\sfv} : \tSym^k(\sfT\ot\sfL^*) \to (\sfN\ot\sfL^*) \ot (\sfL\ot\sfT^*)$.

\begin{proposition}\label{invarfubgp} 
Let $G/P\subset \bP  V$ be a homogeneously embedded rational homogeneous variety and let $o=[\tId]\in G/P$.  Given a choice $\fh \subset \fb \subset \fg$ of Cartan and Borel subalgebras of $\fg$, let $V=\sfL\op \sfT\op \sfN$ be the induced splitting \eqref{E:split0}, and $\sfv \in \cF_Z$ a basis of $V$ respecting the splitting.  The diagram below commutes and characterizes the $(k+1)$--st Fubini form at $\sfv \in \cF_Z$.
\begin{center} \setlength{\unitlength}{5pt}
\begin{picture}(45,12)
\put(-2.5,0){$\tSym^{k}(\sfT\ot \sfL^*)$}
\put(4,9){$U(\fg)^{k}$} 
\put(25,9){$(V\ot V^*)\,\ot\,(V\ot V^*)$} 
\put(26,0){$(\sfN\ot \sfL^*)  \,\ot\, (\sfL \ot \sfT^*)$}
\put(14,10){\vector(1,0){9}}
\put(14,0.5){\vector(1,0){9}} \put(16.5,1.6){$F^{k+1}_\sfv$}
\put(7,7){\vector(0,-1){3}} \put(36.5,7){\vector(0,-1){3}}
\end{picture} \end{center}
\end{proposition}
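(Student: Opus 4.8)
The plan is to verify the diagram by unwinding all three maps into explicit expressions at the fixed frame $\sfv$, reducing commutativity to a single tensor identity, and then identifying that identity with the content of Lemma \ref{L:key}. First I would exploit the basis-independence noted after \eqref{ugmap}: since $(\sfv_i\ot\sfv^j)\ot(\sfv_j\ot\sfv^i)=\tId$, the top arrow sends $\bu\in U(\fg)^k$ to the operator $\bu.(-)$ on $\fgl(V)=V\ot V^*$, re-expressed as an element of $(V\ot V^*)\ot(V\ot V^*)$ through the trace pairing. The right-hand projection onto $(\sfN\ot\sfL^*)\ot(\sfL\ot\sfT^*)$ then selects the block $\tHom(\sfL\ot\sfT^*,\,\sfN\ot\sfL^*)$ of this operator: the second tensor factor $\sfv_j\ot\sfv^i$ survives only for $(j,i)=(0,\a)$, forcing the first factor to be $\bu.(\sfv_\a\ot\sfv^0)$ and retaining its $\sfN\ot\sfL^*$-component. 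Thus the top-then-right composite is
\[
 \bu \ \longmapsto \ \big(\bu.(\sfv_\a\ot\sfv^0)\big)^\mu_0\,(\sfv_\mu\ot\sfv^0)\ot(\sfv_0\ot\sfv^\a).
\]

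Next I would unwind the bottom-left path. The left arrow is the composite of the Poincar\'e--Birkhoff--Witt symbol map $U(\fg)^k\to\tSym^k\fg$ with the projection induced by $\fg\to\fg/\fp=T_oZ=\sfT\ot\sfL^*$ of \eqref{E:sfId}; on a monomial $\bu=u_{\b_1}\cdots u_{\b_k}$ with $u_{\b_i}\in\fg_-$ it returns the symmetric product $\ul{v}_{\b_1}\cdots\ul{v}_{\b_k}$. Viewing $F^{k+1}\in(\sfN\ot\sfL^*)\ot\tSym^{k+1}(\sfL\ot\sfT^*)$ as the partial-contraction map $\tSym^k(\sfT\ot\sfL^*)\to(\sfN\ot\sfL^*)\ot(\sfL\ot\sfT^*)$ described just before the statement, the bottom-then-left composite sends $\bu$ to $r^\mu_{\b_1\cdots\b_k\a}\,(\sfv_\mu\ot\sfv^0)\ot(\sfv_0\ot\sfv^\a)$. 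Commutativity is therefore equivalent to the single identity, for every $\bu=u_{\b_1}\cdots u_{\b_k}\in U^k(\fg_-)$,
\[
 \big(\bu.(\sfv_\a\ot\sfv^0)\big)^\mu_0 \ = \ r^\mu_{\b_1\cdots\b_k\a}\,,
\]
asserting that the representation-theoretic quantity on the left reproduces the frame-theoretic Fubini coefficient on the right at $\sfv$.

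This identity is what Lemma \ref{L:key} is designed to bridge, and I would establish it by the same induction. Lemma \ref{L:key} identifies the defining equations \eqref{E:F} of the $r^\mu_{\b_1\cdots\b_k\a}$ with the equations $0=(\bu.\w)^\mu_0$ via $\psi^\mu_{\b_1\cdots\b_k}=-(\bu.\w)^\mu_0$; contracting with the frame vector fields $\tilde u_\a$ and using $\tilde u_\a\lefthook\varphi^\mu_{\b_1\cdots\b_k}=r^\mu_{\b_1\cdots\b_k\a}$ converts the Maurer--Cartan computation into the purely algebraic module action of $\bu$ on $\fgl(V)=V\ot V^*$, which is exactly $\bu.(\sfv_\a\ot\sfv^0)$. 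Running this in parallel with the inductive construction of \eqref{E:F}, together with the constancy of the coefficients on $\cG$ (Corollary \ref{C:cnst}), should yield the displayed identity for all $k$.

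The main obstacle is the bookkeeping of the lower-order (non-fundamental) terms. For the monomials above the iterated action $\bu.(\sfv_\a\ot\sfv^0)$ appears to collapse, in the $\sfN\ot\sfL^*$ slot, to the bare product $(u_{\b_1}\cdots u_{\b_k}u_\a).\sfv_0$, which by itself reproduces only the top-osculating (fundamental-form) component and is not symmetric in $(\b_1,\dots,\b_k,\a)$; yet the lower components $r^{\mu_j}$, $j\le k$, carry the genuine correction terms of \eqref{E:F} (compare $r^{\mu_2}_{\a\b\c}$ in the Example following Lemma \ref{L:key}). The delicate point is thus to track, through the $\ttE$-graded and osculating filtrations, how the full $\tEnd(V)=V\ot V^*$ structure and the constancy on $\cG$ reorganize these products so that the identity holds for every normal index $\mu$ and the resulting coefficient tensor is fully symmetric---only then does the top arrow descend along the Poincar\'e--Birkhoff--Witt map and the diagram commute. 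It is precisely here that Lemma \ref{L:key}, rather than a direct multiplication, is indispensable.
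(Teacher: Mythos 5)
Your reduction of the diagram to a single identity is sound: unwinding \eqref{ugmap} and the two projections does show that commutativity on a monomial $\bu = u_{\b_1}\cdots u_{\b_k}\in U^k(\fg_-)$ amounts to $\bigl(\bu.(\sfv_\a\ot\sfv^0)\bigr)^\m_0 = r^\m_{\b_1\cdots\b_k\a}$, and your identification of both composites is correct. The gap is in the step you rely on to prove this identity. Contracting $(\bu.\w)^\m_0$ with the frame vector field $\tilde u_\a$ does \emph{not} produce $\bu.(\sfv_\a\ot\sfv^0)$: by the definition $(\bu.\w)(\xi)=\bu.(\w(\xi))$ one gets $\tilde u_\a\lefthook(\bu.\w)=\bu.(\w(\tilde u_\a))=\bu.u_\a$, the action of $\bu$ on the Lie-algebra element $u_\a\in\fg$, whose $\sfN\ot\sfL^*$-component vanishes identically because $\fg\subset\fk$ (cf.\ \eqref{E:gink}). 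So the contraction only returns the tautology $r^\m_{\b_1\cdots\b_k\a}=\tilde u_\a\lefthook\varphi^\m_{\b_1\cdots\b_k}$ and never isolates the rank-one endomorphism $\sfv_\a\ot\sfv^0$, which does not lie in $\fg$. To reach your identity one must instead expand $(\bu.\w)^\m_0=\w^i_j\,\bigl(\bu.(\sfv_i\ot\sfv^j)\bigr)^\m_0$ over \emph{all} index pairs $(i,j)$ and show that, modulo the lower-order Fubini equations and the graded-degree constraints, only the $(i,j)=(\a,0)$ block carries new information. This is exactly the ``main obstacle'' you name in your last paragraph, and you do not close it; invoking Lemma \ref{L:key} a second time does not do that work by itself.

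The paper closes this gap by a mechanism that is absent from your proposal. Corollary \ref{C:cnst} kills the $\td r$ terms and Corollary \ref{C:deg} kills the $\w_\fp$ and quadratic-in-$r$ terms of \eqref{E:F} on $\cG$, collapsing the recursion to \eqref{E:F_homog}; the right-hand side of \eqref{E:F_homog} is then recognized as the coefficients of $\eta.F^k_\sfv$, which yields the clean inductive formula $F^{k+1}_\sfv=(u_\b.F^k_\sfv)\circ\ul{\sfv}^\b$ with base case \eqref{E:base}, and the proposition follows by induction. That graded simplification is the essential content of the proof; without it, or an equivalent argument controlling the lower-graded correction terms and the descent along the Poincar\'e--Birkhoff--Witt map, your proposal does not establish commutativity.
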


\begin{proof}
Corollary \ref{C:deg} yields a substantial simplification of \eqref{E:F} on $\cG$.  Write 
$$
  \w \ = \ \eta \,+\, \w_\fp \, ,
$$
where $\eta$ and $\w_{\ge0}$ are respectively the $\fg_-$ and $\fp$--valued components of the $\fg$--valued Maurer-Cartan form.  Note that $\w^\a_0 = \eta^\a_0$.  It follows that the $\w_\fp$ terms on the left-hand side of \eqref{E:F} must all cancel.  This, along with Corollaries \ref{C:cnst} \& \ref{C:deg}, yields
\begin{equation} \label{E:F_homog}
  r^\m_{\a_1 \ldots \a_k \b} \, \eta^\b_0 \ = \
  - \, r^\n_{\a_1 \ldots \a_k} \, \eta^\m_\n \hfill 
  \ + \ k \, r^\m_{\b(\a_2 \ldots \a_k} \, \eta^\b_{\a_1)} 
  \quad \hbox{ on } \ \cG \,.
\end{equation}
This equation yields the inductive formula \eqref{E:Find} below as follows.

We assume that $\sfv$ respects the decomposition \eqref{E:split0}.  Let $\sfv^* =(\sfv^0,\ldots,\sfv^n)$ denote the basis of $V^*$ dual to $\sfv$.  Then $\{\ul{\sfv}_\a := \sfv_\a \ot \sfv^0\}_{\a=1}^m$ spans $\sfT\ot\sfL^* \simeq T_oZ$, and $\{\ul{\sfv}_\m := \sfv_\m \ot \sfv^0\}_{\m=m+1}^n$ spans $\sfN\ot\sfL^* \simeq N_oZ$.  Let $\{\ul{\sfv}^\a := \sfv_0 \ot \sfv^\a \}_{\a=1}^m$ denote the dual basis of $\sfL \ot \sfT^*$, and set $\ul{\sfv}^{\a_1\cdots\a_k} = \ul{\sfv}^{\a_1} \circ \cdots \circ \ul{\sfv}^{\a_k} \in \tSym^k(\sfL \ot \sfT^*)$.  Then
$$
  F^k_\sfv \ = \ r^\m_{\a_1\cdots\a_k} \, \ul{\sfv}_\m \ot \ul{\sfv}^{\a_1\cdots\a_k} 
  \ \in \ (\sfN\ot\sfL^*) \ot \tSym^k(\sfL\ot\sfT^*) \, .
$$

Observe that the terms appearing on the right-hand side of \eqref{E:F_homog}, at $\sfv \in \cG$, are precisely the coefficients of $\eta.F^k_\sfv$.  Applying the operation $\tilde u_\b \lefthook$ to \eqref{E:F_homog} yields
\begin{equation} \label{E:Find}
  F^{k+1}_\sfv \ = \ (u_\b . F^k_\sfv) \circ \ul{\sfv}^\b \,. 
\end{equation}
Equation \eqref{E:r2} asserts that  
\begin{equation} \label{E:base}
  F^2_\sfv \ = \ \left( (u_\a u_\b) . \sfv_0 \right)^\m \, \ul{\sfv}_\m \,\ot\, 
  \ul{\sfv}^{\a\b} \, ,
\end{equation}
where $\left( (u_\a u_\b) . \sfv_0 \right)^\m$ denotes the $\sfv_\m$ coefficient of $(u_\a u_\b) . \sfv_0\in V$ with respect to the basis $\sfv$.  An inductive argument with \eqref{E:base} and \eqref{E:Find} yields Proposition \ref{invarfubgp}.
\end{proof}

\bibliography{refs} 
\bibliographystyle{plain}
\end{document}